\title{Well-posedness of McKean-Vlasov SDEs with density-dependent drift}
\author[Le]{Anh-Dung Le}
\address{Toulouse School of Economics, 1 Esplanade de l'Université, 31000 Toulouse, France}
\email[Le]{leanhdung1994@gmail.com}
\author[Villeneuve]{Stéphane Villeneuve}
\address{Toulouse School of Economics, 1 Esplanade de l'Université, 31000 Toulouse, France}
\email[Villeneuve]{stephane.villeneuve@tse-fr.eu}
\date{\today}
\thanks{This work was supported by ANR MaSDOL (No. ANR-19-CE23-0017); Air Force Office of Scientific Research, Air Force Material Command, USAF (No. FA9550-19-7026); and ANR Chess (No. ANR-17-EURE-0010).}
\begin{document}

\begin{abstract}
In this paper, we study well-posedness of McKean-Vlasov stochastic differential equations (SDE) whose drift depends pointwisely on marginal density and satisfies a local integrability condition in time-space variables. The drift and noise coefficients are assumed to be Lipschitz continuous in distribution variable with respect to Wasserstein metric $W_p$. Our approach is by approximation with mollifiers. We prove strong existence of a solution. Weak and strong uniqueness are obtained when $p=1$, the drift coefficient is bounded, and the diffusion coefficient is distribution free.
\end{abstract}

\maketitle

\textbf{Keywords:} McKean-Vlasov SDEs, density-dependent SDEs, local integrability

\textbf{2020 MSC:} 60B10, 60H10

\tableofcontents

\section{Introduction} \label{introduction}
The study of distribution-dependent SDEs started with McKean’s seminal work \cite{mckean1966class} about Vlasov equation of plasma which had been proposed in \cite{kac1956foundations}. Classical results about the solvability of McKean-Vlasov equations include \cite{funaki1984certain,sznitman1984nonlinear,scheutzow1987uniqueness}. Since then, the literature on the well-posedness of McKean-Vlasov SDEs has been extended significantly. For recent results, we refer to \cite{mishura2020existence,chaudru_de_raynal_strong_2020,huang_mckean-vlasov_2021,10.3150/20-BEJ1268,huang2022singular,chaudru_de_raynal_well-posedness_2022}, the survey \cite{huang_distribution_2021}, and references therein. For applications on mean-field games, see the two-volume monograph \cite{carmona2018probabilistic}.

In this paper, we consider the density-dependent McKean-Vlasov SDEs studied in \cite{barbu2020nonlinear}. More precisely, let $p \in [1, \infty)$ and $\sP_p (\bR^d)$ be the space of Borel probability measures on $\bR^d$ with finite $p$-th moment. We endow $\sP_p (\bR^d)$ with the Wasserstein metric $W_p$. Let $T>0$ and $\bT$ be the interval $[0, T]$. We consider measurable functions
\begin{align}
b &: \bT \times \bR^d \times \bR_+ \times \sP_p (\bR^d) \to \bR^d, \\
\sigma &: \bT \times \bR^d \times \sP_p (\bR^d) \to \bR^d \otimes \bR^m.
\end{align}

Let $(B_t)$ be a $m$-dimensional Brownian motion ($m$-BM) and $(\cF_t)$ an admissible filtration (AF) on a probability space (PS) $(\Omega, \cA, \bP)$. Our object of study is the SDE
\begin{equation} \label{main_eq1}
\begin{cases} 
\diff X_t = b(t, X_t, \ell_t (X_t), \mu_t) \diff t + \sigma (t, X_t, \mu_t) \diff B_t , \\
\text{$\nu$ is the distribution of $X_0$, $\mu_t$ is that of $X_t$} , \\
\text{and $\ell_{t}$ is the density of $X_t$} .
\end{cases} 
\end{equation}

In the paper, we always mean by ``density'' the probability density function. First, we give a brief survey of the related literature. In \cite{hao_second_2023,issoglio_mckean_2023,issoglio_degenerate_2024}, $\sigma$ is constant and $b$ belongs to a Besov space. On the other hand, \cite{blanchard_probabilistic_2010,barbu_probabilistic_2011,belaribi_uniqueness_2012,belaribi_probabilistic_2013} studied generalized porous media equations where $b = 0$ and $\sigma$ is density-dependent but time-homogeneous. In \cite{jourdain_propagation_1998,jourdain_propagation_2013,cohen_wellposedness_2019,barbu2018probabilistic,barbu2020nonlinear,barbu2021solutions,barbu2021uniqueness,barbu_evolution_2023,barbu_uniqueness_2023,rehmeier_weighted_2023,grube_strong_2023}, $b$ and $\sigma$ are density-dependent but time-homogeneous. More generally, \cite{grube_strong_2024,barbu2023nonlinear} allow $b$ and $\sigma$ to be depend on time. However, $b$ and $\sigma$ are distribution-independent in those previously mentioned papers. For a recent survey on existence result of density-dependent SDEs, we refer to \cite{izydorczyk2019mckean}.

Our paper is closely related to \cite{hao_strong_2024,wang2023singular} where $b$ is both distribution-dependent and density-dependent. In particular, \cite{hao_strong_2024} used the total variation metric on $\sP (\bR^d)$ while \cite{wang2023singular} used supremum norm on the space of bounded densities. The current paper contributes in three aspects:
\begin{enumerate}
\item We use a mollifying argument whereas \cite{hao_strong_2024,wang2023singular} employed a Picard-iteration argument. We argue that our approach is more flexible because it can be extended to those situations where reasonable Hölder estimates are available. An interesting case for future study is when $b$ is allowed to grow linearly in space as in \cite{menozzi2021density}.
\item For existence result, the conditions on $\sigma$ in \cite{wang2023singular,hao_strong_2024} are more restrictive than ours. First, \cite{hao_strong_2024} assumed that $\sigma(t, x, \mu) = \sigma(t, x)$. Second, \cite{wang2023singular} assumed that $\sigma$ is Lipschitz in space, that $\nabla \sigma$ is Hölder continuous, and that $\|\sigma(t, \cdot, \mu) - \sigma(t, \cdot, \nu)\|_{C_b^{\alpha}} \lesssim \|\ell_\mu - \ell_\nu\|_{\infty}$. Here $\ell_\mu, \ell_\nu$ are the densities of $\mu, \nu$ respectively.
\item To be more aligned with existing literature of Mckean-Vlasov SDEs, we use Wasserstein metric for assumptions of continuity. This makes estimating supremum norm between marginal densities (as in \cite{hao_strong_2024,wang2023singular}) (of two weak solutions) not applicable in our case. However, using Wasserstein metric to estimate the difference between marginal distributions (of two weak solutions) is also not applicable due to the presence of pointwise density $\ell_t (X_t)$ in $b$. To overcome these difficulties, we will estimate weighted total variation distance between marginal densities.
\end{enumerate}

We recall notions of a solution:

\begin{definition}
\begin{enumerate}
\item A \textit{strong solution} to \eqref{main_eq1} is a continuous $\bR^d$-valued process $(X_t)$ on $(\Omega, \cA, \bP)$ such that for $t \in \bT$: $X_t$ is $\cF_t$-adapted, $X_t$ has a distribution $\mu_t \in \sP_p (\bR^d)$, $X_t$ admits a density $\ell_t$, and
\begin{equation} \label{sol_def}
\begin{split}
X_t = X_0 + \int_0^t b(s, X_s, \ell_s (X_s), \mu_s) \diff s & + \int_0^t \sigma (s, X_s, \mu_s) \diff B_s
\qtext{$\bP$-a.s.,} \\
\int_0^t \bE [ |b(s, X_s, \ell_s (X_s), \mu_s)| & + |\sigma (s, X_s, \mu_s)|^2 ] \diff s < \infty.
\end{split}
\end{equation}

\item A \textit{weak solution} to \eqref{main_eq1} is a continuous $\bR^d$-valued process $(X_t)$ on some PS $(\Omega, \cA, \bP)$ on which there exist some $m$-BM $(B_t)$ and some AF $(\cF_t)$ such that the conditions in (1) are satisfied.

\item SDE \eqref{main_eq1} has \textit{strong uniqueness} if, whenever the PS, the AF and the $m$-BM are fixed, two strong solutions $(X_t)$ and $(X'_t)$ such that $X_0 = X'_0$ coincide $\bP$-a.s. on the path space $C(\bT; \bR^d)$. SDE \eqref{main_eq1} has \textit{weak uniqueness} if two weak solutions with the same initial distribution induce the same distribution on $C(\bT; \bR^d)$.

\item SDE \eqref{main_eq1} is \textit{strongly well-posed} if it has strong solution and strong uniqueness. SDE \eqref{main_eq1} is \textit{weakly well-posed} if it has weak solution and weak uniqueness. SDE \eqref{main_eq1} is \textit{well-posed} if it is both strongly and weakly well-posed.
\end{enumerate}
\end{definition}

In \zcref{main_result}, we state our main results about existence and uniqueness of a solution to \eqref{main_eq1}. In \zcref{preliminary}, we remind some facts about optimal transport. Also, we recall estimates of marginal density and establish those of marginal distribution for classical SDEs. We prove our theorems in \zcref{main_thm1:proof} and \zcref{main_thm2:proof} respectively.

Throughout this paper, we use the following conventions:
\begin{enumerate}
\item The set $\bR^m \otimes \bR^n$ is the space of matrices of size $m \times n$ with real entries. For $x \in \bR^m \otimes \bR^n$ and $y \in \bR^n \otimes \bR^k$, let $xy$ be their matrix product. For $x, y \in \bR^m \otimes \bR^n$, let $\langle x, y \rangle$ be their Frobenius inner product and $|x|$ the induced Frobenius norm of $x$.
\item Let $\bR_+ \coloneq \{x \in \bR : x \ge 0\}$. We denote by $\cB (\bR^d)$ the Borel $\sigma$-algebra on $\bR^d$. For brevity, we write $\infty$ for $+\infty$. We denote $x \vee y \coloneq \max \{x, y\}$ and $x \wedge y \coloneq \min \{x, y\}$ for $x, y \in \bR$.
\item We denote by $\nabla, \nabla^2$ the gradient and the Hessian with respect to (w.r.t) the spatial variable. We denote by $\partial_t$ the derivative w.r.t time.
\item Let $L^p (\bR^d)$ be the Lebesgue space of real-valued $p$-integrable functions on $\bR^d$.
\item Let $\sP (\bR^d)$ be the space of Borel probability measures on $\bR^d$. The weak topology (and thus weak convergence $\rightharpoonup$) of $\sP (\bR^d)$ is the topology induced by $C_b (\bR^d)$. The weak$^*$ topology (and thus weak$^*$ convergence $\overset{\ast}{\rightharpoonup}$) of $\sP (\bR^d)$ is the topology induced by $C_c (\bR^d)$.
\end{enumerate}

\section{Main results} \label{main_result}

We recall a class of functions locally integrable in space-time. Let $p, q \in [1, \infty]$. The localized version $\tilde L^p (\bR^d)$ of $L^p (\bR^d)$ is defined by the norm
\[
\|f\|_{\tilde L^p} \coloneq \sup_{x \in \bR^d} \| 1_{B (x, 1)} f\|_{L^p},
\]

Above, $B (x, r)$ is the open ball centered at $x$ with radius $r$. For $0\le t_0 < t_1 \le T$, we define the Bochner space
\[
L^p_q (t_0, t_1) \coloneq L^q ([t_0, t_1]; L^p ({\bR}^d)) .
\]

The localized version $\tilde L^p_q (t_0, t_1)$ of $L^p_q (t_0, t_1)$ is defined by the norm
\[
\| g \|_{\tilde L^p_q (t_0, t_1)} \coloneq \sup_{x \in \bR^d} \| 1_{B(x, 1)} g \|_{L^p_q (t_0, t_1)}.
\]

Then $\|g \|_{\tilde L^\infty_\infty (t_0, t_1)} = \|g \|_{L^\infty_\infty (t_0, t_1)}$. It holds for $p, q \in [1, \infty)$ that
\begin{align}
\| g \|_{L^p_q (t_0, t_1)} &= \bigg ( \int_{t_0}^{t_1} \left ( \int_{\bR^d} | g(s, y) |^p \diff y \right )^{\frac{q}{p}} \diff s \bigg )^{\frac{1}{q}}, \\ 
\| g \|_{\tilde L^p_q (t_0, t_1)} &= \sup_{x \in \bR^d} \bigg ( \int_{t_0}^{t_1} \bigg ( \int_{B(x, 1)} | g(s, y) |^p \diff y \bigg )^{\frac{q}{p}} \diff s \bigg )^{\frac{1}{q}}.
\end{align}

For brevity, we denote
\[
L^p_q (t) \coloneq L^p_q (0, t), \quad \tilde L^p_q (t) \coloneq \tilde L^p_q (0, t), \quad L^p_q \coloneq L^p_q (0, T), \quad \tilde L^p_q \coloneq \tilde L^p_q (0, T).
\]

The class $\sK$ of exponent parameter is defined by
\[
\sK\coloneq \left \{ (p, q) \in (2, \infty]^2 : \frac{d}{p} + \frac{2}{q} < 1 \right \}.
\]

We denote by $M_p (\varrho)$ the $p$-th moment of $\varrho \in \sP (\bR^d)$, i.e., $M_p (\varrho) \coloneq \int_{\bR^d} |x|^p \diff \varrho (x)$.

\subsection{Main results}

Below, we introduce the main assumption about the initial distribution and the coefficients of \eqref{main_eq1}.  Let $a \coloneq \sigma \sigma^\top$. We denote $b_t (x, r , \varrho) \coloneq b(t, x, r , \varrho), \sigma_t(x, \varrho) \coloneq \sigma (t, x, \varrho)$ and $a_t(x, \varrho) \coloneq a (t, x, \varrho)$.

\begin{assumption} \label{main_assmpt1}
There exist constants $p \in [1, \infty), \beta \in (0, 1), C > 0$ and, for $i \in \{0, 1, \ldots, l\}$, function $1 \le f_i \in \tilde L^{p_i}_{q_i}$ with $(p_i, q_i) \in \sK$ such that for all $t \in \bT; x, y \in \bR^d; r, r' \in \bR_+$ and $\varrho, \varrho' \in {\sP}_p ({\bR}^d)$:
\begin{enumerate}
\item $a_t (x, \varrho)$ is invertible.
\item $|b_t (x, r, \varrho)| \le f_0 (t, x)$.
\item $x \mapsto \sigma_t (x, \varrho)$ is weakly differentiable and $| \nabla \sigma_t (x, \varrho)| \le \sum_{i=1}^l f_i (t, x)$.
\item $\nu \in {\sP}_{p} ({\bR}^d)$ has a density $\ell_\nu \in L^\infty (\bR^d)$.
\item
\begin{align}
| \sigma_t (x, \varrho) | + | a_t^{-1} (x, \varrho) | & \le C, \\
| b_t (x, r, \varrho) - b_t (x, r', \varrho) | & \le C \{ |r - r'| + W_p (\varrho, \varrho') \}, \\
| \sigma_t (x, \varrho) - \sigma_t (y, \varrho') | & \le C \{ |x-y|^\beta + W_p (\varrho, \varrho') \} .
\end{align}
\end{enumerate}
\end{assumption}

We gather parameters about $(b, \sigma)$ in \zcref{main_assmpt1}:
\[
\Theta_1 \coloneq (p, d, T, \beta, C, l, (p_i, q_i, f_i)_{i=0}^l).
\]

There is no continuity condition w.r.t spatial variable of $b$. \zcref{main_assmpt1}(2) means that marginal density and marginal distribution do not affect local integrability of the drift. If $b$ is bounded, then it satisfies \zcref{main_assmpt1}(2).

Our main results are the following:

\begin{theorem}[Existence] \label{main_thm1}
Let \zcref{main_assmpt1} hold. Then the following two statements hold:
\begin{enumerate}
\item The SDE \eqref{main_eq1} has a strong solution $(X_t)$ whose marginal distribution is denoted by $(\mu_t)$ and marginal density is denoted by $(\ell_t)$.
\item There exist constants $c_1>0$ (depending on $\Theta_1$), $c_2 >0$ (depending on $\Theta_1, \nu$), and $\delta \in (0, \frac{1}{2})$ (depending on $q_0$) such that
\begin{align}
\sup_{t \in \bT} \|\ell_t\|_{\infty} &\le c_1 \|\ell_\nu\|_{\infty}, \\
W_p (\mu_s, \mu_t) &\le c_2 |t-s|^{\delta} \qtextq{for} s, t \in \bT.
\end{align}
\end{enumerate}
\end{theorem}

\begin{theorem}[Uniqueness] \label{main_thm2}
Let \zcref{main_assmpt1} hold. Assume in addition that $p=1, \sigma_t (x, \varrho) = \sigma_t (x)$ and $|b_t (x, r, \varrho) | \le C$ for all $(t, x, r, \varrho) \in \bT \times \bR^d \times \bR_+ \times \sP_p (\bR^d)$. Then the following two statements hold:
\begin{enumerate} 
\item For $k \in \{1, 2\}$, let $(X^k_t, t \in \bT)$ be a weak solution to \eqref{main_eq1}, $\nu_k$ its initial distribution satisfying \zcref{main_assmpt1}(4) and $(\ell^k_t, t \in \bT)$ its marginal density. Then there exists an increasing function $\Lambda : \bR_+ \to \bR_+$ (depending on $\Theta_1$) such that
\begin{align}
& \sup_{t \in \bT} \int_{\bR^d} ( 1+|x|) | \ell^1_t (x) - \ell^2_t (x) | \diff x \\
\le  & \Lambda (\| \ell_{\nu_1} \|_{\infty} + M_1 (\nu_1)) \int_{\bR^d} (1+|x|) | \ell_{\nu_1} (x) - \ell_{\nu_2} (x) | \diff x.
\end{align}
\item The SDE \eqref{main_eq1} has both weak and strong uniqueness.
\end{enumerate}
\end{theorem}

The condition $p=1$ in \zcref{main_thm2} is crucial for an application of Gronwall's lemma in its proof.

\subsection{Outline of the proofs}

We will summarize the main ideas behind our mollifying argument. Let $(\rho^n)$ be a sequence of mollifiers. We consider the SDE
\begin{equation} \label{outline:eq1}
\begin{cases} 
\diff X^n_t = b(t, X^n_t, \{ \rho^n * \mu_t^n \} ( X_t^{n}), \mu^n_t) \diff t + \sigma (t, X^n_t, \mu^n_t) \diff B_t , \\
\text{$\nu$ is the distribution of $X^n_0$, and $\mu^n_t$ is that of $X^n_t$} .
\end{cases} 
\end{equation}

Above, $*$ is the convolution operator, i.e.,
\[
\{ \rho^n * \varrho \} (x) \coloneq \int_{\bR^d} \rho^n (x-y) \diff \varrho (y)
\qtextq{for every} \varrho \in \sP (\bR^d) .
\]

Then \eqref{outline:eq1} is well-posed and each $X^n_t$ admits a density $\ell^n_t$. The sequence $(\ell^n)$ is locally Hölder continuous on $(0, T] \times \bR^d$. By Arzelà–Ascoli theorem, we can extract a subsequence that converges to some function $\ell : \bT \times \bR^d \to \bR_+$ uniformly on every strip $[ R^{-1}, T] \times B(0, R)$ where $R>0$. We then verify that $\ell_t \coloneq \ell (t, \cdot)$ is indeed a density whose induced distribution $\mu_t \in \sP_p (\bR^d)$. Also, $\mu^n_t$ converges to $\mu_t$ in $W_p$ (as $n \to \infty$) uniformly for $t \in [ R^{-1}, T]$ where $R>0$. By Itô's lemma, $(\mu^n_t, t \in \bT)$ satisfies the Fokker-Planck equation
\begin{equation} \label{outline:eq2}
\partial_t \ell^n_t (x) = - \partial_{x_i} \{ b(t, x, \{ \rho^n * \ell_t^n \} (x), \mu^n_t) \ell^n_t (x) \} + \frac{1}{2} \partial_{x_i} \partial_{x_j} \{ a^{i, j} (t, x, \mu^n_t) \ell^n_t (x) \} .
\end{equation}

Above, we adapt Einstein summation convention. By the convergence of $\ell^n_t$ to $\ell_t$, that of $\mu^n_t$ to $\mu_t$, and the continuity of $b(t, x, r, \varrho)$ in $(r, \varrho)$, we deduce that $(\mu_t)$ satisfies
\begin{equation} \label{outline:eq3}
\partial_t \ell_t (x) = - \partial_{x_i} \{ b(t, x, \ell_t (x), \mu_t) \ell (t, x) \} + \frac{1}{2} \partial_{x_i} \partial_{x_j} \{ a^{i, j} (t, x, \mu_t) \ell_t (x) \}.
\end{equation}

Notice that \eqref{outline:eq3} is the Fokker-Planck equation associated with \eqref{main_eq1}. By superposition principle (e.g. \cite[Section 2]{barbu2020nonlinear}), \eqref{main_eq1} has a weak solution.

\section{Preliminaries} \label{preliminary}

\subsection{Some facts from optimal transport}

Let $\mu, \nu \in \sP (\bR^d)$. The set of \textit{transport plans} (or \textit{couplings}) between $\mu$ and $\nu$ is defined by
\[
\Gamma (\mu, \nu) \coloneq \{ \varrho \in \sP ({\bR}^d \times {\bR}^d ) : \mu = \pi^1_\sharp \varrho \text{ and } \nu = \pi^2_\sharp \varrho \} .
\]

Above, $\pi^i$ is the projection of $\bR^d \times \bR^d$ onto its $i$-th coordinate, and $\pi^i_\sharp \varrho \in \sP (\bR^d)$ is the push-forward measure of $\varrho$ through $\pi^i$ , i.e., $\{ \pi^i_\sharp \varrho \} (A) = \varrho (\{ \pi^i \}^{-1} (A))$ for all $A \in \cB (\bR^d)$. For $\mu, \nu \in \sP_p (\bR^d)$, we define
\[
W_p (\mu, \nu) \coloneq \inf_{\varrho \in \Gamma (\mu, \nu)} \left \{ \int_{\bR^d} |x-y|^p \diff \varrho (x, y) \right \}^{1/p}.
\]

By \cite[Theorem 6.18]{villani2009optimal}, $(\sP_p (\bR^d), W_p)$ is a Polish space. By \cite[Theorem 6.9]{villani2009optimal}, it holds for $\mu_n, \mu \in \sP_p (\bR^d)$ that $W_p (\mu_n, \mu) \to 0$ i.f.f $\mu_n \rightharpoonup \mu$ and $M_p (\mu_n) \to M_p (\mu)$. Let $\Phi_p$ be the collection of all $(\varphi, \psi) \in C_b (\bR^d) \times C_b (\bR^d)$ such that $\varphi (x) + \psi (y) \le |x-y|^p$ for $x, y \in \bR^d$. For brevity, we denote $W_p^p (\mu, \nu) \coloneq (W_p (\mu, \nu))^p$. We denote by $|\mu-\nu|$ the variation of the signed measure $\mu-\nu$ as in \cite[Section 6.1]{rudin_real_1987}. By \cite[Theorems 6.2 and 6.4]{rudin_real_1987}, $|\mu-\nu|$ is a non-negative finite measure. We recall properties needed to prove our theorems:
\begin{lemma} \label{optimal_transport}
\begin{enumerate}
\item \cite[Theorem 1.14]{villani2003topics} It holds for $\mu, \nu \in \sP_1 (\bR^d)$ that
\[
W_1 (\mu, \nu) = \sup_{\substack{ f \in L^1 (|\mu-\nu|) \\ [f]_1 \le 1}} \int_{\bR^d} f (x) \diff \{ \mu - \nu \} (x) ,
\]
where
\[
[f]_1 \coloneq \sup_{\substack{ x,y \in \bR^d \\ x \neq y}} \frac{|f(x)-f(y)|}{|x-y|} .
\]
\item \cite[Remark 7.1.2]{villani2003topics} It holds for $1\le p \le q < \infty$ and $\mu, \nu \in \sP_q (\bR^d)$ that $W_p (\mu, \nu) \le W_q (\mu, \nu)$.
\end{enumerate}
\end{lemma}

Above, the first claim is called Kantorovich duality while the second one is called Kantorovich-Rubinstein theorem. For more information about optimal transport, we refer to \cite{figalli_invitation_2021,maggi2023optimal,villani2009optimal,villani2003topics,Ambrosio2013,santambrogio2015optimal,ambrosio2021lectures,thorpe2018introduction}. The next result states that $W^p_p$ is controlled by weighted total variation distance.

\begin{lemma} \label{wp_lp}
Let $\mu, \nu \in \sP_p (\bR^d)$ be absolutely continuous with corresponding densities $\ell_\mu, \ell_\nu$. Then
\[
W_p^p (\mu, \nu) \le (1 \vee 2^{p-1}) \int_{\bR^d} |x|^p \times |\ell_\mu-\ell_\nu| (x) \diff x.
\]
\end{lemma}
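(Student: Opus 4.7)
The plan is to construct an explicit coupling of $\mu$ and $\nu$ whose cost is directly controlled by the weighted $L^1$-distance of the densities. I start from the Jordan-type decomposition of the signed integrable function $\ell_\mu - \ell_\nu$: set $f_+ := (\ell_\mu - \ell_\nu)_+$ and $f_- := (\ell_\mu - \ell_\nu)_-$, so that $f_+, f_- \ge 0$, $f_+ f_- = 0$ a.e., $\ell_\mu - \ell_\nu = f_+ - f_-$, and $|\ell_\mu - \ell_\nu| = f_+ + f_-$. Writing $g := \ell_\mu \wedge \ell_\nu$, I then have $\ell_\mu = g + f_+$ and $\ell_\nu = g + f_-$. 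Because $\int \ell_\mu = \int \ell_\nu = 1$, the numbers $m := \int f_+ = \int f_-$ agree. If $m = 0$ then $\mu = \nu$ and the inequality is trivial, so I may assume $m > 0$.

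The key step is to write down a candidate coupling $\xi \in \PPP(\RR^d \times \RR^d)$ that keeps the common mass $g$ on the diagonal and uses a product measure to redistribute the excess: for Borel $A \subset \RR^d \times \RR^d$, define
\[
\xi(A) := \int_{\RR^d} 1_A(x,x)\, g(x) \diff x + \frac{1}{m} \int_{\RR^d \times \RR^d} 1_A(x,y)\, f_+(x)\, f_-(y) \diff x \diff y.
\]
A direct Fubini computation shows that $\xi \in \Gamma(\mu, \nu)$: the first marginal evaluates to $(g(x) + f_+(x))\diff x = \ell_\mu(x)\diff x = \mu(\diff x)$, and the second marginal to $\nu(\diff y)$ by the same argument.

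To estimate the transport cost, observe that the diagonal part of $\xi$ contributes nothing since $|x-x|^p = 0$, so
\[
(W_p(\mu, \nu))^p \le \int_{\RR^d \times \RR^d} |x-y|^p \diff \xi = \frac{1}{m} \int_{\RR^d \times \RR^d} |x-y|^p f_+(x) f_-(y) \diff x \diff y.
\]
Applying the elementary inequality $|x-y|^p \le (|x|+|y|)^p \le (1 \vee 2^{p-1})(|x|^p + |y|^p)$, valid for all $p \ge 1$, and using $\int f_+ = \int f_- = m$ to carry out the $x$- and $y$-integrations separately, the right-hand side is bounded by
\[
(1 \vee 2^{p-1}) \int_{\RR^d} |x|^p (f_+(x) + f_-(x)) \diff x = (1 \vee 2^{p-1}) \int_{\RR^d} |x|^p |\ell_\mu - \ell_\nu|(x) \diff x,
\]
which is the claimed bound. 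No step is genuinely hard; the only item that requires care is writing $\xi$ so that the singular diagonal piece $g(x) \delta_{x=y}\diff x$ is rigorously a probability measure on $\RR^d \times \RR^d$ and then checking the marginals against $\mu$ and $\nu$.
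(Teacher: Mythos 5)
Your proof is correct, and it takes a genuinely different route from the paper. The paper argues in two steps: it first shows, via the definition of the variation of a signed measure through finite measurable partitions, that $|\mu-\nu|(B) \le \int_B |\ell_\mu-\ell_\nu|(x)\diff x$ for every Borel set $B$, and then it invokes \cite[Proposition 7.10]{villani2003topics}, which bounds $(W_p(\mu,\nu))^p$ by $(1\vee 2^{p-1})\int |x|^p \diff|\mu-\nu|(x)$. You instead prove the bound from scratch by exhibiting an explicit coupling: keep the common part $g=\ell_\mu\wedge\ell_\nu$ on the diagonal and redistribute the excess mass through the normalized product measure $\tfrac1m f_+(x)f_-(y)\diff x\diff y$; the marginal check, the vanishing diagonal cost, and the inequality $|x-y|^p \le (1\vee 2^{p-1})(|x|^p+|y|^p)$ then give exactly the stated estimate. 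Your computation is sound (the total mass is $\int g + m = 1$, the marginals are $\mu$ and $\nu$, and the $m$'s cancel correctly in the cost estimate), and the degenerate case $m=0$ is handled. What your approach buys is self-containedness — you essentially reprove the cited Villani estimate in the absolutely continuous case — together with the sharper intermediate bound $(W_p(\mu,\nu))^p \le \tfrac1m \iint |x-y|^p f_+(x) f_-(y) \diff x \diff y$, which only charges the cost to the region where the densities actually differ. What the paper's route buys is brevity: given the reference, it avoids constructing any coupling and works for general (not necessarily absolutely continuous) $\mu,\nu$ at the level of the intermediate total-variation estimate, specializing to densities only at the last step.
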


The proof of \zcref{wp_lp} is straightforward. For the sake of completeness, we include its proof in \zcref{appendix}.

\subsection{Moment estimates of marginal distribution}

In the rest of \zcref{preliminary}, we consider measurable functions
\begin{align}
b &: \bT \times \bR^d \to \bR^d, \\
\sigma &: \bT \times \bR^d \to \bR^d \otimes \bR^m.
\end{align}

Let $a \coloneq \sigma \sigma^\top$. We denote $b_t \coloneq b (t, \cdot), \sigma_t \coloneq \sigma (t, \cdot)$ and $a_t \coloneq a (t, \cdot)$. Let $\nu \in \sP (\bR^d)$. We consider the SDE
\begin{equation} \label{eq2}
\begin{cases}
\diff X_t = b (t, X_t) \diff t + \sigma (t, X_t) \diff B_t , \\
\text{$\nu$ is the distribution of $X_0$.}
\end{cases}
\end{equation}

We consider the following set of assumption:
\begin{assumption} \label{drift_noise3}
The following conditions hold:
\begin{enumerate}
\item $a_t (x)$ is invertible for every $(t, x) \in \bT \times \bR^d$.
\item There exist measurable maps $b^{(0)} : \bT \times \bR^d \to \bR^d$ and $b^{(1)} : \bT \times \bR^d \to \bR^d$ such that $b_t (x) = b_t^{(0)} (x) + b_t^{(1)} (x)$ for every $(t, x) \in \bT \times \bR^d$.
\item There exists $1 \le f_0 \in \tilde L^{p_0}_{q_0}$ with $(p_0, q_0) \in \sK$ such that $|b^{(0)}_t (x)| \le f_0 (t, x)$ for every $(t, x) \in \bT \times \bR^d$.
\item There exist constants $\beta \in (0, 1), C > 0$ such that for all $t \in \bT$ and $x, y \in \bR^d$:
\begin{align}
|b_t^{(1)} (x) - b_t^{(1)} (y)| &\le C |x-y|, \\
|b_t^{(1)} (0)| + | \sigma_t (x) | + | a_t^{-1} (x) | & \le C, \\
|\sigma_t(x) - \sigma_t (y)| &\le C |x-y|^{\beta}.
\end{align}
\end{enumerate}
\end{assumption}

Under \zcref{drift_noise3}, \eqref{eq2} has a unique weak solution  (see e.g. \cite[Theorem 2.1(1)]{wang2023singular}). \zcref{drift_noise3} is appealing because it is a general but sufficient condition to obtain Krylov's and Khasminskii's estimates \cite{krylov2008controlled}, which are a key ingredient for establishing the other estimates in the remaining of \zcref{preliminary}. We gather parameters in \zcref{drift_noise3}:
\[
\Theta_3 \coloneq (d, T, \beta, C, p_0, q_0, f_0).
\]

We define the class $\bar \sK$ of exponent parameter by
\[
\bar \sK \coloneq \left \{ (p, q) \in (1, \infty)^2 : \frac{d}{p} + \frac{2}{q} < 2 \right \}.
\]

\begin{remark} \label{exponent_class}
If $f \in \tilde L^p_q$ for some $(p, q) \in \sK$ then there exists $(\bar p, \bar q) \in \bar \sK$ such that $|f|^2 \in \tilde L^{\bar p}_{\bar q}$.
\end{remark}

First, we recall the following estimates:

\begin{lemma}[see e.g. {\cite[Lemma 4.1]{xia2020lq}}] \label{krylov_khasminskii_estimate}
Let $(b, \sigma)$ satisfy \zcref{drift_noise3} and $(X_t)$ be the unique weak solution to \eqref{eq2}. We fix $(p, q) \in \bar K$. Then the following two statements hold:
\begin{enumerate}
\item \textup{(Khasminskii's estimate)} There exist constants $c>0, k >1$ (depending on $\Theta_3, p, q$) such that for $0 \le t_0 < t_1 \le T$ and $g \in \tilde L^{p}_{q}(t_0, t_1)$:
\begin{equation} \label{krylov_khasminskii_estimate:ineq0a}
\bE \bigg [ \exp \left ( \int_{t_0}^{t_1} |g(s, X_s)| \diff s \right ) \bigg | \mathcal F_{t_0} \bigg ] \le \exp(c(1+ \|g\|_{\tilde L^{p}_{q} (t_0, t_1)}^k)).
\end{equation}

\item \textup{(Krylov's estimate)} For $j \in [1, \infty)$, there exists a constant $c>0$ (depending on $\Theta_3, p, q, j$) such that for $0 \le t_0 < t_1 \le T$ and $g \in \tilde L^{p}_{q}(t_0, t_1)$:
\begin{equation} \label{krylov_khasminskii_estimate:ineq0b}
\bE \bigg [ \left ( \int_{t_0}^{t_1} |g(s, X_s)| \diff s \right )^j \bigg | \mathcal F_{t_0} \bigg ] \le c \|g\|_{\tilde L^{p}_{q} (t_0, t_1)}^{j}.
\end{equation}
\end{enumerate}
\end{lemma}

For the sake of completeness, we include its proof in \zcref{appendix}. Second, we establish the following moment estimates:

\begin{theorem} \label{moment_estimate}
Let $(b, \sigma)$ satisfy \zcref{drift_noise3} and $(X_t)$ be the unique weak solution to \eqref{eq2}. Then the following two statements hold:
\begin{enumerate}
\item $X_t$ admits a density for $t \in (0, T]$.

\item Let $p \in [1, \infty)$. There exist constants $c > 0$ (depending on $\Theta_3, p$) and $\delta \in (0, \frac{1}{2})$ (depending on $q_0$) such that for $0 \le u \le t \le T$:
\begin{align}
\bE \big [ \sup_{s \in [u, t]} |X_s|^p \big ] &\le c (1+ \bE [|X_u|^p]), \label{moment_estimate:ineq1} \\
\bE \big [ \sup_{s \in [u, t]} |X_s - X_u|^p \big ] &\le c |t-u|^{\delta p} (1+\bE [|X_u|^p]) . \label{moment_estimate:ineq2} 
\end{align}
\end{enumerate}
\end{theorem}

\begin{proof}
By \zcref{drift_noise3}(2), there exist measurable maps $b^{(0)} : \bT \times {\bR}^d \to {\bR}^d$ and $b^{(1)} : \bT \times {\bR}^d \to {\bR}^d$ such that $b_t (x) = b_t^{(0)} (x) + b_t^{(1)} (x)$. By \zcref{drift_noise3}(3), there exists $f_0 \in \tilde L^{p_0}_{q_0}$ with $(p_0, q_0) \in \sK$ such that $|b^{(0)}_t (x)| \le f_0 (t, x)$. We consider the SDE
\begin{equation} \label{moment_estimate:eq1}
\diff \bar X_t = b^{ (1)} (t, \bar X_t) \diff t + \sigma (t, \bar X_t) \diff B_t .
\end{equation}

Above, the distribution of $\bar X_0$ is $\nu$. Clearly, $(b^{ (1)}, \sigma)$ satisfies \zcref{drift_noise3}, so \eqref{moment_estimate:eq1} is well-posed. We define
\begin{align}
\xi_t & \coloneq \{ \sigma^\top_t a_t^{-1} b_t^{ (0)} \} (\bar X_t),\\
\bar B_t & \coloneq B_t - \int_0^t \xi_s \diff s, \\
R_t & \coloneq \exp \bigg ( \int_0^t \xi_s^\top \diff B_s - \frac{1}{2} \int_0^t |\xi_s|^2 \diff s \bigg ), \\
I_t & \coloneq \bE \bigg [ \exp \bigg ( \frac{1}{2} \int_0^t |\xi_s|^2 \diff s \bigg ) \bigg ] .
\end{align}

By uniform boundedness of $\sigma^\top_t a_t^{-1}$, \zcref{exponent_class} and \zcref{krylov_khasminskii_estimate}(1), we have $I_T < \infty$. So $R_T$ is an exponential martingale with $\bE [R_T]=1$. By Girsanov theorem, $(\bar B_t, t \in \bT)$ is an $m$-dimensional Brownian motion under the probability measure $\bar \bP \coloneq R_T \bP$. We denote by $\bar \bE$ the expectation w.r.t $\bar \bP$. Clearly, \eqref{moment_estimate:eq1} can be written under $\bar \bP$ as
\begin{equation} \label{moment_estimate:eq2}
\diff \bar X_t = b (t, \bar X_t) \diff t + \sigma (t, \bar X_t) \diff \bar B_t .
\end{equation}

\begin{enumerate}
\item By \zcref{drift_noise3}(4) and \cite[Theorem 1.2]{menozzi2021density}, the distribution of $\bar X_t$ under $\bP$ admits a density. Notice that $\bar \bP$ and $\bP$ are equivalent, so the distribution of $\bar X_t$ under $\bar \bP$ also admits a density. Because $\bar X_0$ is $\mathcal F_0$-measurable, it holds for $\varphi \in C_c^\infty (\bR^d)$ that
\[
\bar \bE [ \varphi ( \bar X_0 ) ] = \bE [ \varphi ( \bar X_0 ) R_0 ] = \bE [ \varphi ( \bar X_0 )].
\]

Then $\nu$ is also the distribution of $\bar X_0$ under $\bar \bP$. By weak uniqueness of \eqref{eq2} and \eqref{moment_estimate:eq2}, the distribution of $X_t$ under $\bP$ is the same as that of $\bar X_t$ under $\bar \bP$. Thus the distribution of $X_t$ under $\bP$ admits a density.

\item The estimate \eqref{moment_estimate:ineq1} follows from \cite[Theorem 1.1(1)]{huang2022singular}. It remains to prove \eqref{moment_estimate:ineq2}. Recall that $f_0$ is a parameter in \zcref{drift_noise3}. We have
\begin{align}
\bE \big [ \sup_{s \in [u, t]} | X_s - X_u |^p \big ] & \lesssim \begin{myaligned}[t]
& \bE \bigg [ \bigg ( \int_u^t f_0 (r, X_r) \diff r \bigg )^p \bigg ] \\
& + \bE \bigg [ \bigg ( \int_u^t |b^{(1)} (r, X_r)| \diff r \bigg )^p \bigg ] \\
& + \bE \bigg [ \sup_{s \in [u, t]}  \left | \int_u^s \sigma (r, X_r) \diff B_r \right |^p \bigg ]
\end{myaligned} \\
&\eqcolon J_1 + J_2 + J_3.
\end{align}

There exists $\bar q_0 \in (2, q_0)$ such that $(p_0, \bar q_0) \in \bar K$. Let $\delta \coloneq \frac{1}{\bar q_0} - \frac{1}{q_0} \in (0, \frac{1}{2})$. By Hölder's inequality,
\[
\|f_0\|_{\tilde L^{p_0}_{\bar q_0} (u, t)} \le (t - u)^{\delta} \|f_0\|_{\tilde L^{p_0}_{q_0} (u, t)}.
\]

By \zcref{krylov_khasminskii_estimate}(2),
\[
J_1 \lesssim \|f_0\|_{\tilde L^{p_0}_{\bar q_0} (u, t)}^p \le (t - u)^{\delta p} \|f_0\|_{\tilde L^{p_0}_{q_0} (u, t)}^p \lesssim (t - u)^{\delta p}.
\]

As for $I_2$ and $I_3$, we have
\begin{align}
J_2 & \lesssim  |t-u|^{p-1} \bE \left [ \int_u^t (1+|X_r|^p) \diff r \right ] \\
& \lesssim |t-u|^{p} (1 + \bE [ |X_u|^p ]) \qtext{by \eqref{moment_estimate:ineq1}}, \\
J_3 & \lesssim |t-u|^{\frac{p}{2}}.
\end{align}

Then
\[
\bE \big [ \sup_{s \in [u, t]} | X_s - X_u |^p \big ] \lesssim |t-u|^{\delta p} (1 + \bE [ |X_u|^p ]).
\]

The estimate \eqref{moment_estimate:ineq2} then follows. This completes the proof.
\end{enumerate}
\end{proof}

\subsection{Heat kernel estimates}

For $(s, x) \in [0, T) \times \bR^d$, we consider the SDE
\begin{equation} \label{eq1}
\begin{cases} 
\diff X^x_{s,t} &= b (t, X^x_{s, t}) \diff t + \sigma (t, X^x_{s, t}) \diff B_t \qtextq{for} t \in (s, T], \\
X^x_{s,s} &= x .
\end{cases}
\end{equation}

If $X^x_{s,t}$ admits a density, we denote it by $p^{b, \sigma}_{s, t}(x , \cdot)$. We consider the following set of assumption:
\begin{assumption} \label{drift_noise4}
The following conditions hold:
\begin{enumerate}
\item $a_t (x)$ is invertible for every $(t, x) \in \bT \times \bR^d$.
\item There exists $1 \le f_0 \in \tilde L^{p_0}_{q_0}$ with $(p_0, q_0) \in \sK$ such that $|b_t (x)| \le f_0 (t, x)$ for every $(t, x) \in \bT \times \bR^d$.
\item There exist constants $\beta \in (0, 1), C > 0$ such that for all $t \in \bT$ and $x, y \in \bR^d$:
\begin{align}
| \sigma_t (x) | + | a_t^{-1} (x) | & \le C, \\
|\sigma_t(x) - \sigma_t (y)| &\le C |x-y|^{\beta}.
\end{align}
\end{enumerate}
\end{assumption}

Notice that \zcref{drift_noise4} is \zcref{drift_noise3} with $b^{(1)} = 0$. As such, under \zcref{drift_noise4}, \eqref{eq1} has a unique weak solution; and for $t \in (s, T]$, $X^x_{s,t}$ admits a density (by \zcref{moment_estimate}(1)). In this case, $\{ p^{b, \sigma}_{s, t} : 0 \le s < t \le T \}$ are transition densities associated with \eqref{eq1}. We gather parameters from \zcref{drift_noise4}:
\[
\Theta_4 \coloneq (d, T, \beta, C, p_0, q_0, f_0).
\]

For $\lambda>0$ and $\gamma \in \bR$, the heat kernel $p^{\gamma , \lambda}$ is defined for $t>0$ and $x \in \bR^d$ by
\begin{equation} \label{gaussian_heat_kernel}
p^{\gamma , \lambda}_t (x) \coloneq \frac{1}{t^{(\gamma + d) / 2}} e^{-\frac{\lambda|x|^2}{t}} .
\end{equation}

First, we recall the following estimates:

\begin{lemma} \label{Holder_estimate}
\cite[Lemma 3.9]{zhao_existence_2024}
Let $(b, \sigma)$ satisfy \zcref{drift_noise4}. Let $\gamma_0 := 1-\frac{d}{p_0}-\frac{2}{q_0}$. Then the map $x \mapsto p^{b, \sigma}_{s, t}(x , y)$ is differentiable for every $(t, y) \in (s, T] \times \bR^d$. Moreover, there exist constants $c_1, c_2, c_3>0$ and $\lambda \in (0, 1)$ depending on $\Theta_4$ such that
\begin{enumerate}
\item (Gaussian estimate) for $0 \le s < t \le T$ and $x, y \in \bR^d$:
\[
c_1 p^{0, \lambda^{-1}}_{t-s} (y-x) \le p^{b, \sigma}_{s, t}(x , y) \le c_2 p^{0, \lambda}_{t-s} (y-x) .
\]

\item (Gradient estimate in $x$) for $0 \le s < t \le T$ and $x, y \in \bR^d$:
\[
|\nabla_x p^{b, \sigma}_{s, t} (x , y) | \le c_3 p^{1, \lambda}_{t-s} (y-x) .
\]

\item (Hölder estimate in $t$ and $y$) for $\gamma \in (0, \beta \wedge \gamma_0)$, there exists a constant $c_4 >0$ depending on $(\Theta_4, \gamma)$ such that for $0 \le s < t_1 < t_2 \le T$ and $x, y, y_1, y_2 \in \bR^d$:
\begin{align}
|p^{b, \sigma}_{s, t_2} (x , y) - p^{b, \sigma}_{s, t_1} (x , y)| & \le c_4 |t_1-t_2|^{\frac{\gamma}{2}} \sum_{i=1}^2 p^{\gamma, \lambda}_{t_i-s} (x-y) , \\
| p^{b, \sigma}_{s, t} (x , y_1) - p^{b, \sigma}_{s, t} (x , y_2) | & \le c_4 |y_1-y_2|^{\gamma} \sum_{i=1}^2 p^{\gamma, \lambda}_{t-s} (x-y_i) .
\end{align}
\end{enumerate}
\end{lemma}

We will need the following results:

\begin{corollary} \label{Holder_estimate2}
Let $(b, \sigma)$ satisfy \zcref{drift_noise4} and $(X_t)$ be the unique weak solution to \eqref{eq2}. Let $\nu$ admit a density $\ell_\nu \in L^\infty (\bR^d)$. We denote by $\ell_t$ the density of $X_t$. Then the following two statements hold:
\begin{enumerate}
\item There exists a constant $c_1 > 0$ depending on $\Theta_4$ such that:
\[
\sup_{t \in \bT} \| \ell_t \|_\infty \le c_1 \| \ell_\nu \|_\infty .
\]

\item Let $\gamma_0 := 1-\frac{d}{p_0}-\frac{2}{q_0}$. For $\gamma \in (0, \beta \wedge \gamma_0)$, there exists a constant $c_2 >0$ depending on $(\Theta_4, \gamma, \nu)$ such that for $0 < s, t \le T$ and $x, y \in \bR^d$:
\begin{align}
|\ell_{t} (x) - \ell_{s} (x)| & \le c_2 |t-s|^{\frac{\gamma}{2}} ( t^{-\frac{\gamma}{2}} + s^{-\frac{\gamma}{2}} ) , \\
| \ell_t (x) - \ell_t (y) | & \le c_2 | x -y |^{\gamma} t^{-\frac{\gamma}{2}} .
\end{align}
\end{enumerate}
\end{corollary}

\begin{lemma} \label{duhamel_form1}
Let $(b, \sigma)$ satisfy \zcref{drift_noise4} and $(X_t)$ be the unique weak solution to \eqref{eq2}. Let $\nu$ admit a density $\ell_\nu$. We denote by $\ell_t$ the density of $X_t$. Then it holds for $t \in \bT$ and $x \in \bR^d$:
\begin{align}
\ell_t (x) &= \begin{multlined}[t]
\int_{\bR^d} p_{0, t}^{0, \sigma} (y, x) \ell_{\nu}(y) \diff y + \int_0^t \int_{{\bR^d}} \ell_s (y) \langle b_s (y) ,  \nabla_y p^{0, \sigma}_{s, t} (y, x) \rangle \diff y \diff s.
\end{multlined}
\end{align}
\end{lemma}

\begin{lemma} \label{tail_moment}
Let $p \in \{ 0 \} \cup [1, \infty)$ and $\nu \in \sP_p (\bR^d)$. Let $(b, \sigma)$ satisfy \zcref{drift_noise4} and $(X_t)$ be the unique weak solution to \eqref{eq2}. We denote by $\mu_t$ the distribution of $X_t$. There exists a function $\phi : \bR_+ \to \bR_+$ depending on $(\Theta_4, \nu, p)$ such that $\lim_{R \to \infty} \varphi (R) = 0$ and that
\[
\sup_{t \in \bT} \int_{B^c (0, R)} | x |^p \diff \mu_t (x) \le \phi (R)
\qtextq{for every} R \ge 0.
\]

Above, $B^c (0, R) \coloneq \bR^d \setminus B(0, R)$ where $B (0, R) \coloneq \{ x \in \bR^d : |x| \le \bR \}$.
\end{lemma}

The proof of \zcref{Holder_estimate2} is a straightforward application of \zcref{Holder_estimate} and the fact that $\ell_t (x) = \int_{\bR^d} \ell_\nu (y) p^{b, \sigma}_{0, t} (y, x) \diff y$. The proof of \zcref{duhamel_form1} is contained in that of \cite[Lemma 4.1]{wang2023singular}. The proof of \zcref{tail_moment} is straightforward application of Markov's inequality and \zcref{Holder_estimate}(1). For the sake of completeness, we still include their proofs in \zcref{appendix}.

\section{Proof of \zcref{main_thm1}} \label{main_thm1:proof}

We recall that $\Theta_1 = (p, d, T, \beta, C, l, (p_i, q_i, f_i)_{i=0}^l)$ contains parameters about $(b, \sigma)$ from \zcref{main_assmpt1}. We write $M_1 \lesssim M_2$ if there exists a constant $c>0$ (depending on $\Theta_1, \nu$) such that $M_1 \le c M_2$. We construct a sequence $(\rho^n)$ of mollifiers as follows. We fix a smooth density $\rho : \bR^d \to \bR$ whose support is contained in $B (0, 1)$. For each $n \in \bN$, we define $\rho^n :\bR^d \to \bR$ by $\rho^n (x) \coloneq n^d \rho (nx)$ and consider the McKean-Vlasov SDE
\begin{equation} \label{main_thm1:eq1}
\begin{cases} 
\diff X^n_t = b(t, X^n_t, (\rho^n * \mu_t^n) ( X_t^{n}), \mu^n_t) \diff t + \sigma (t, X^n_t, \mu^n_t) \diff B_t , \\
\text{$\nu$ is the distribution of $X^n_0$, and $\mu^n_t$ is that of $X^n_t$} .
\end{cases} 
\end{equation}

Then \eqref{main_thm1:eq1} is a mollified version of \eqref{main_eq1}.

\subsection{Stability estimates for mollified SDEs} \label{stability_estimate}

We define the map $b^n : \bT \times \bR^d \times \sP_p (\bR^d) \to \bR^d$ by $b^n (t, x, \varrho) \coloneq b(t, x, (\rho^n * \varrho) (x), \varrho)$. Then
\begin{align}
|b^n (t, x, \varrho) - b^n (t, x, \tilde \varrho)| &\lesssim \left | \int_{\bR^d} \rho^n (x-y) \diff (\varrho - \tilde \varrho) (y) \right | + W_p (\varrho, \tilde \varrho) \label{main_thm1:ineq1} \\
&\le \|\nabla \rho^n\|_\infty W_1 (\varrho, \tilde \varrho) + W_p (\varrho, \tilde \varrho) \label{main_thm1:ineq2} \\
&\le (1 + \|\nabla \rho^n\|_\infty) W_p (\varrho, \tilde \varrho). \label{main_thm1:ineq3} 
\end{align}

Above, \eqref{main_thm1:ineq1} is due to \zcref{main_assmpt1}(5), \eqref{main_thm1:ineq2} due to \zcref{optimal_transport}(1), and \eqref{main_thm1:ineq3} due to \zcref{optimal_transport}(2). It follows that $b^n$ is Lipschitz in distribution variable. We consider the McKean-Vlasov SDE
\begin{equation} \label{main_thm1:eq2}
\begin{cases} 
\diff Y_t = b^n (t, Y_t, \xi_t) \diff t + \sigma (t, Y_t, \xi_t) \diff B_t , \\
\text{$\nu$ is the distribution of $Y_0$, and $\xi_t$ is that of $Y_t$} .
\end{cases} 
\end{equation}

It follows from \cite[Theorem 1.1(1)]{huang2022singular} that \eqref{main_thm1:eq2} is well-posed.

\begin{remark}
The Lipschitz continuity of $b^n (t, x, \cdot)$ is just for applying the existence part of \cite[Theorem 1.1(1)]{huang2022singular}. The size of its Lipschitz constant of $b^n (t, x, \cdot)$ does not play any role below.
\end{remark}

By \eqref{main_thm1:eq1}, $(X^n_t, t \in \bT)$ satisfies \eqref{main_thm1:eq2}. As a consequence, \eqref{main_thm1:eq1} is well-posed. We define the maps $\bar b^n : \bT \times \bR^d \to \bR^d$ and $\bar \sigma^n : \bT \times \bR^d \to \bR^d \otimes \bR^m$ by
\begin{align}
\bar b^n (t, x) & \coloneq b^n (t, x, \mu^n_t), \\
\bar \sigma^n (t, x) & \coloneq \sigma (t, x, \mu^n_t).
\end{align}

We have
\begin{equation} \label{main_thm1:eq4}
\diff X_t^{n} = \bar b^n (t, X_t^{n}) \diff t + \bar \sigma^n (t, X_t^{n}) \diff B_t.
\end{equation}

Let $\bar a^n \coloneq \bar \sigma^n (\bar \sigma^n)^\top$. We denote $\bar b^n_t \coloneq \bar b^n (t, \cdot), \bar \sigma^n_t \coloneq \bar \sigma^n (t, \cdot)$ and $\bar a^n_t \coloneq \bar a^n (t, \cdot)$.

\begin{lemma} \label{same_parameters}
All pairs $(\bar b^n, \bar \sigma^n)_{n \in \bN}$ satisfy \zcref{drift_noise4} for the same set of parameters.
\end{lemma}

\begin{proof}
By \zcref{main_assmpt1}, we have for any $(t, r, \varrho) \in \bT \times \bR_+ \times \sP_p (\bR^d)$ and $x, y \in \bR^d$ that
\begin{align}
| b (t, x, r, \varrho) | & \le C f_0 (t, x) , \\
\|\sigma_t\|_\infty + \|a_t^{-1}\|_\infty & \le C, \\
| \sigma_t (x, \varrho) - \sigma_t (y, \varrho) | & \le C |x-y|^\beta .
\end{align}

Then
\begin{align}
|\bar b^n_t (x)| & = | b^n (t, x, \mu^n_t) | \\
& = | b(t, x, (\rho^n * \mu^n_t) (x), \mu^n_t) | \\
& \le C f_0 (t, x) , \\
| \bar \sigma^n_t (x) - \bar \sigma^n_t (y) | & = | \sigma_t (x, \mu^n_t) - \sigma_t (y, \mu^n_t) | \\
& \le C |x-y|^\beta .
\end{align}

The uniform elipticity of $\bar \sigma^n$ is inherited from that of $\sigma$. This completes the proof.
\end{proof}

By \zcref{moment_estimate}(1), each $X^n_t$ admits a density denoted by $\ell^n_t$. By \zcref{moment_estimate}(2), there exists a constant $\delta \in (0, \frac{1}{2})$ depending on $q_0$ such that
\begin{align}
\sup_{n \in {\bN}} \sup_{t \in \bT} M_p (\mu^n_t) &\lesssim 1, \label{main_thm1:ineq6} \\
\sup_{n \in {\bN}} W_p (\mu^n_s, \mu^n_t) &\lesssim |t-s|^{\delta}
\qtextq{for all}
s, t \in \bT . \label{main_thm1:ineq7} 
\end{align}

Let $\gamma_0 \coloneq 1-\frac{d}{p_0}-\frac{2}{q_0}$ and $\gamma \coloneq \frac{\beta \wedge \gamma_0}{2}$. By \zcref{Holder_estimate2}(1),
\begin{equation} \label{main_thm1:ineq4}
\sup_{n \in {\bN}} \sup_{t \in \bT} \| \ell^n_t \|_{\infty} \lesssim \| \ell_\nu \|_\infty.
\end{equation}

By \zcref{Holder_estimate2}(2), it holds for $t \in (0, T]$ that
\begin{align}
\sup_{n \in {\bN}} \sup_{\substack{s, r \in [t,  T] \\ s \neq r} } \sup_{x \in \bR^d} \frac{|\ell^n_{s} (x) - \ell^n_{r} (x)|}{|s-r|^{\frac{\gamma}{2}}} & \lesssim  t^{-\frac{\gamma}{2}} , \label{main_thm1:ineq5a} \\
\sup_{n \in {\bN}} \sup_{s \in [t, T]} \sup_{\substack{x, y \in \bR^d \\ x \neq y}} \frac{| \ell^n_s (x) - \ell^n_s (y) |}{| x -y |^{\gamma}} & \lesssim  t^{-\frac{\gamma}{2}} . \label{main_thm1:ineq5b}
\end{align}

By \zcref{tail_moment}, there exists a function $\phi : \bR_+ \to \bR_+$ depending on $(\Theta_1, \nu)$ such that $\lim_{R \to \infty} \phi (R) = 0$ and that
\begin{equation} \label{main_thm1:ineq8} 
\sup_{n \in {\bN}} \sup_{t \in \bT} \int_{B^c (0, R)} (1 + | x |^p) \diff \mu^n_t (x) \le \phi (R)
\qtextq{for every} R \ge 0.
\end{equation}

By \eqref{main_thm1:ineq7}, the map $\bT \to \sP_p (\bR^d), \, t \mapsto \mu^n_t$ is continuous.

\subsection{Convergence of marginal densities of mollified SDEs} \label{convergence_marginal_densities}

By \eqref{main_thm1:ineq5a}, \eqref{main_thm1:ineq5b}, Arzelà–Ascoli theorem and diagonal extraction, there exist a sub-sequence (also denoted by $(\ell^n)$ for simplicity) and a continuous function $\ell : \bT \times \bR^d \to \bR_+$ such that
\begin{equation} \label{main_thm1:eq5}
\lim_{n} \sup_{t \in [ R^{-1}, T]} \sup_{x \in B(0, R)} |\ell^n_t (x) - \ell_t (x)| =0
\qtextq{for every}
R > T^{-1}.
\end{equation}

Above, $\ell_t \coloneq \ell (t, \cdot)$. Clearly, $\ell_0 = \ell_\nu$ and
\begin{equation} \label{main_thm1:ineq9}
\sup_{t \in \bT} \| \ell_t \|_{\infty} \lesssim \| \ell_\nu \|_\infty .
\end{equation}

We remark that the constant in \eqref{main_thm1:ineq9} depends on $\Theta_1$. Next we verify that $\ell_t$ is indeed a density for every $t \in (0, T]$. We have
\begin{align}
\int_{B(0, R)} \ell^n_t (x) \diff x &= 1- \int_{B^c(0, R)} \ell^n_t (x) \diff x \\
&\gtrsim 1 - \phi (R) \qtext{by \eqref{main_thm1:ineq8}}.
\end{align}

By \eqref{main_thm1:eq5}, \eqref{main_thm1:ineq9} and DCT,
\[
\int_{B(0, R)} \ell_t (x) \diff x = \lim_{n \to \infty} \int_{B(0, R)} \ell^n_t (x) \diff x.
\]

It follows that
\[
1 - \phi (R) \lesssim \int_{B(0, R)} \ell_t (x) \diff x \le 1 .
\]

Then
\[
\int_{\bR^d} \ell_t (x) \diff x = \lim_{R \to \infty} \int_{B(0, R)} \ell_t (x) \diff x = 1.
\]

Let $\mu_t \in \sP (\bR^d)$ be the probability measure induced by $\ell_t$, i.e.,
\[
\mu_t (B) \coloneq \int_{B} \ell_t (x) \diff x
\qtextq{for every}
B \in \cB (\bR^d). 
\]

\begin{lemma} \label{weak_convergence}
We have for each $t \in \bT$ that $\mu^n_t \rightharpoonup \mu_t$ as $n \to \infty$.
\end{lemma}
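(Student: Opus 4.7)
The plan is to verify weak convergence by testing against arbitrary $f \in C_b(\RR^d)$, exploiting the local uniform convergence \eqref{main_thm1:eq5} together with the uniform tightness bound \eqref{main_thm1:ineq8a}. Since $\mu^n_t$ and $\mu_t$ have densities $\ell^n_t$ and $\ell_t$ respectively, the test integrals reduce to integrals of $f\ell^n_t$ and $f\ell_t$ against Lebesgue measure, which is a setting well-suited to combining pointwise convergence of densities with a mass-escape control.

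The execution would proceed as follows. Fix $f \in C_b(\RR^d)$ and $R > 0$, and decompose
\begin{align}
\int_{\RR^d} f \diff \mu^n_t - \int_{\RR^d} f \diff \mu_t
= \int_{B(0,R)} f(x) \{\ell^n_t(x) - \ell_t(x)\} \diff x + T_n(R),
\end{align}
where the tail term satisfies $|T_n(R)| \le \|f\|_\infty \{\mu^n_t(B_R^c) + \mu_t(B_R^c)\}$. The first (compact) term vanishes in the limit because \eqref{main_thm1:eq5} yields uniform convergence of $\ell^n_t$ to $\ell_t$ on $B(0, R)$, and $B(0, R)$ has finite Lebesgue measure, so
\[
\bigg| \int_{B(0,R)} f(x) \{\ell^n_t(x) - \ell_t(x)\} \diff x \bigg| \le \|f\|_\infty \cdot |B(0,R)| \cdot \sup_{x \in B(0, R)} |\ell^n_t(x) - \ell_t(x)| \xrightarrow{n \to \infty} 0.
\]
For the tail, \eqref{main_thm1:ineq8a} already gives $\mu^n_t(B_R^c) \lesssim 1/R$ uniformly in $n$. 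The analogous bound for $\mu_t$ follows by applying Fatou's lemma to the pointwise convergence $\ell^n_t \to \ell_t$ (a consequence of \eqref{main_thm1:eq5}):
\[
\mu_t(B_R^c) = \int_{B_R^c} \ell_t(x) \diff x \le \liminf_{n \to \infty} \int_{B_R^c} \ell^n_t(x) \diff x \lesssim \frac{1}{R}.
\]
Consequently $\limsup_{n} |\int f \diff \mu^n_t - \int f \diff \mu_t| \lesssim \|f\|_\infty / R$, and letting $R \to \infty$ yields the desired convergence for every $f \in C_b(\RR^d)$.

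There is no serious obstacle here — the argument is a routine combination of local uniform convergence of densities with uniform tightness. The only small care point is that the tail bound must also be established for the limit $\mu_t$ (not just for the prelimit measures $\mu^n_t$), which is why Fatou's lemma is invoked; this also implicitly uses that $\ell_t$ is a bona fide probability density, a fact already checked in the excerpt just before the statement of the lemma.
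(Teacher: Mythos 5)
Your proof is correct. It differs from the paper's in the mechanics, though both rest on the same two ingredients (local uniform convergence of the densities and control of mass at infinity). The paper argues at the level of measures: it first notes that \eqref{main_thm1:eq5} gives $\mu^n_t \overset{\ast}{\rightharpoonup} \mu_t$, then upgrades weak$^*$ to weak convergence by splitting a test function $f \in C_b(\RR^d)$ as $f = (1-g)f + gf$ with a continuous cutoff $g \in C_c(\RR^d)$, $0 \le g \le 1$, and finally uses the fact (established just before the lemma) that $\mu_t$ has total mass $1$, so that $\sup_g \int g \diff \mu_t = 1$ kills the error term; the quantitative tightness bound \eqref{main_thm1:ineq8a} enters only indirectly, through the earlier verification that $\int \ell_t = 1$. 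You instead work at the level of densities with a hard indicator cutoff $1_{B(0,R)}$: the compact part converges because $B(0,R)$ has finite Lebesgue measure and \eqref{main_thm1:eq5} gives uniform convergence there, the tail of $\mu^n_t$ is handled by \eqref{main_thm1:ineq8a} directly, and the tail of $\mu_t$ by Fatou's lemma applied to the (everywhere) pointwise convergence $\ell^n_t \to \ell_t$. Your route is slightly more self-contained, since it does not presuppose $\mu_t(\RR^d)=1$ (indeed, taking $f \equiv 1$ in your estimate recovers it), and it gives a quantitative rate $\|f\|_\infty/R$ before the final limit; the paper's route is the softer, standard "weak$^*$ convergence plus no mass loss implies weak convergence" argument, which would survive even without the explicit $1/R$ decay, needing only tightness of the limit. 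Both are valid, and your small care point about needing the tail bound for the limit measure (via Fatou) is exactly the right one.
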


\begin{proof}
It suffices to consider $t \in (0, T]$. By \eqref{main_thm1:eq5},
\begin{equation} \label{weak_convergence:ineq0}
\mu^n_t \overset{\ast}{\rightharpoonup} \mu_t \quad \text{as} \quad n \to \infty.
\end{equation}

Let $f \in C_b (\bR^d)$ and $g \in C_c (\bR^d)$ such that $0 \le g \le 1$. Then $gf \in C_c (\bR^d)$ and $f = (1-g) f + gf$. We have
\[
\left | \int_{\bR^d} f (x) \diff \{ \mu^n_t - \mu_t \} (x) \right | \le \begin{myaligned}[t]
& \|f\|_\infty \int_{\bR^d} \{ 1-g (x) \} \diff \{ \mu^n_t + \mu_t \} (x) \\
& + \left | \int_{\bR^d} g (x) f (x) \diff \{ \mu^n_t - \mu_t \} (x) \right |.
\end{myaligned}
\]

By \eqref{weak_convergence:ineq0},
\[
\limsup_n \int_{\bR^d} g (x) f (x) \diff \{ \mu^n_t - \mu_t \} (x) =0 .
\]

Then
\[
\limsup_n \left | \int_{\bR^d} f (x) \diff \{ \mu^n_t - \mu_t \} (x) \right | \le \|f\|_\infty \limsup_n \int_{\bR^d} \{ 1-g (x) \} \diff \{ \mu^n_t + \mu_t \} (x) .
\]

Notice that
\begin{align}
\limsup_n \int_{\bR^d} \{ 1-g (x) \} \diff \mu^n_t (x) &= 1 - \liminf_n \int_{\bR^d} g (x) \diff \mu^n_t (x) \\
&= 1 - \int_{\bR^d} g (x) \diff \mu_t (x) \qtext{by \eqref{weak_convergence:ineq0}} \\
&=  \int_{\bR^d} \{ 1-g (x) \} \diff \mu_t (x) .
\end{align}

Thus
\begin{equation} \label{weak_convergence:ineq1}
\limsup_n \left | \int_{\bR^d} f (x) \diff \{ \mu^n_t - \mu_t \} (x) \right | \le 2 \|f\|_\infty \int_{\bR^d} \{ 1-g (x) \} \diff \mu_t (x) .
\end{equation}

Because $\mu_t$ is a probability measure,
\begin{equation} \label{weak_convergence:eq1}
\sup \left  \{ \int_{\bR^d} g (x) \diff \mu_t (x) : g \in C_c (\bR^d) \text{ and } 0 \le g \le 1 \right \}=1.
\end{equation}

The claim then follows from \eqref{weak_convergence:ineq1} and \eqref{weak_convergence:eq1}.
\end{proof}

By monotone convergence theorem (MCT),
\begin{align}
\int_{B^c (0, R)} |x|^p \diff \mu_t (x) &= \lim_{K \to \infty} \int_{B^c (0, R) \cap B(0, K)} |x|^p \diff \mu_t (x) \\
&= \lim_{K \to \infty} \lim_{n \to \infty} \int_{B^c (0, R) \cap B(0, K)} |x|^p \diff \mu^n_t (x) \qtext{by \eqref{main_thm1:eq5}} \\
&\lesssim \phi (R) \qtext{by \eqref{main_thm1:ineq8}} .
\end{align}

Then
\begin{equation} \label{main_thm1:ineq10a}
\sup_{t \in \bT} \int_{B^c (0, R)} |x|^p \diff \mu_t (x) \lesssim \phi (R).
\end{equation} 

Clearly, $x \mapsto |x|^p$ is continuous and bounded from below. By \zcref{weak_convergence} and Portmanteau's theorem,
\begin{align}
\sup_{t \in \bT} M_p (\mu_t) &\le \sup_{t \in \bT} \liminf_n M_p (\mu^n_t) \\
&\lesssim 1 \quad \text{by \eqref{main_thm1:ineq6}} . \label{main_thm1:ineq11}
\end{align}

Then $\mu_t \in \sP_p (\bR^d)$ for every $t \in \bT$. We have
\begin{align}
W_p (\mu_s, \mu_t) &\le \liminf_n W_p (\mu^n_s, \mu^n_t) \label{main_thm1:eq8} \\
&\lesssim |t-s|^{\delta} \label{main_thm1:ineq12}.
\end{align}

Above, \eqref{main_thm1:eq8} is due to the lower semicontinuity of $W_p$ w.r.t weak convergence; \eqref{main_thm1:ineq12} is due to \eqref{main_thm1:ineq7}. Next we establish an essential result about convergence:

\begin{lemma} \label{wp_convergence}
We have for every $R > T^{-1}$ that
\[
\lim_n \sup_{t \in [ R^{-1}, T]} W_p (\mu^n_t, \mu_t) = 0 .
\]
\end{lemma}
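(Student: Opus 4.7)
The plan is to prove the statement in two steps: first establish pointwise convergence $W_p(\mu^n_t, \mu_t) \to 0$ for every fixed $t \in \TT$, and then upgrade this pointwise convergence to uniform convergence on $\TT$ by exploiting the uniform time-equicontinuity already recorded in \eqref{main_thm1:ineq7} and \eqref{main_thm1:ineq13}, together with the compactness of $\TT$.

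For the pointwise step, by \cite[Theorem 6.9]{villani2009optimal} it suffices to show that $\mu^n_t \rightharpoonup \mu_t$ and $M_p(\mu^n_t) \to M_p(\mu_t)$ as $n \to \infty$. The weak convergence is provided by \cref{weak_convergence}. For the moment convergence I would split at an arbitrary radius $R>0$:
\[
M_p(\mu^n_t) = \int_{B(0, R)} |x|^p \ell^n_t(x) \diff x + \int_{B_R^c} |x|^p \diff \mu^n_t(x),
\]
and similarly for $\mu_t$. On $B(0, R)$, the uniform convergence on compacts \eqref{main_thm1:eq5} together with the bound $|x|^p \le R^p$ immediately gives $\int_{B(0,R)} |x|^p \ell^n_t \to \int_{B(0,R)} |x|^p \ell_t$. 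The tail contributions are controlled \emph{uniformly} in $n$ and $t$: the estimate \eqref{main_thm1:ineq8} gives the tail of $\mu^n_t$, and \eqref{main_thm1:ineq10a} gives the analogous tail of $\mu_t$. Letting first $n \to \infty$ and then $R \to \infty$, I conclude $M_p(\mu^n_t) \to M_p(\mu_t)$, hence $W_p(\mu^n_t, \mu_t) \to 0$ for each $t$.

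To upgrade to uniformity in $t$, fix $\varepsilon > 0$. By \eqref{main_thm1:ineq7} and \eqref{main_thm1:ineq13}, there is a constant $c_3 > 0$ (depending only on $\Theta_1, \nu$) such that
\[
W_p(\mu^n_s, \mu^n_t) + W_p(\mu_s, \mu_t) \le c_3 |t-s|^{\delta_2}, \qquad s, t \in \TT,\; n \in \NN^*.
\]
Since $\TT$ is compact, I can pick a finite subset $\{t_1, \ldots, t_N\} \subset \TT$ such that every $t \in \TT$ lies within time-distance $(\varepsilon / (3 c_3))^{1/\delta_2}$ of some $t_k$. By the pointwise step already proved, choose $n_0$ so that $W_p(\mu^n_{t_k}, \mu_{t_k}) < \varepsilon/3$ for every $k \in \{1, \ldots, N\}$ and every $n \ge n_0$. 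For arbitrary $t \in \TT$, choosing a nearby $t_k$ and applying the triangle inequality yields
\[
W_p(\mu^n_t, \mu_t) \le W_p(\mu^n_t, \mu^n_{t_k}) + W_p(\mu^n_{t_k}, \mu_{t_k}) + W_p(\mu_{t_k}, \mu_t) < \varepsilon
\]
for every $t \in \TT$ and $n \ge n_0$, which gives the desired uniform convergence.

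The one delicate point is that the moment convergence argument relies on a tail bound for $\mu_t$ that is uniform in $t$ and matches the one for $\mu^n_t$; this is precisely what \eqref{main_thm1:ineq10a} supplies (derived earlier from Portmanteau and monotone convergence on top of \eqref{main_thm1:ineq8}). Beyond this bookkeeping, no new estimate is needed, and the rest is a standard Arzelà--Ascoli-type compactness argument.
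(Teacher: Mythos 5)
Your argument is correct, but it follows a genuinely different route from the paper. The paper does not pass through the characterization of $W_p$-convergence at all: it applies \cref{wp_lp} to bound $(W_p(\mu^n_t,\mu_t))^p$ directly by $\int_{\RR^d}|x|^p|\ell^n_t-\ell_t|\diff x$, splits this integral into $B(0,R)$ and $B_R^c$, and observes that the ball part tends to $0$ \emph{uniformly in $t$} by \eqref{main_thm1:eq5} while the tail part is $\lesssim 1/R$ uniformly in $n,t$ by \eqref{main_thm1:ineq8} and \eqref{main_thm1:ineq10a}; letting $R\to\infty$ then gives uniformity in $t$ in one stroke, with no $\eps$-net argument. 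You instead obtain only pointwise convergence first, via \cref{weak_convergence} together with convergence of $p$-th moments and the cited equivalence from Villani's Theorem 6.9 (your moment argument, splitting at radius $R$ and using \eqref{main_thm1:eq5}, \eqref{main_thm1:ineq8}, \eqref{main_thm1:ineq10a}, is sound, and both $\mu^n_t$ and $\mu_t$ lie in $\PPP_p(\RR^d)$ by \eqref{main_thm1:ineq6} and \eqref{main_thm1:ineq11}, so the triangle inequality for $W_p$ is legitimate), and then upgrade to uniformity through the uniform-in-$n$ time-Hölder estimates \eqref{main_thm1:ineq7} and \eqref{main_thm1:ineq13} plus compactness of $\TT$. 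Both proofs are valid along the extracted subsequence, which is all the lemma claims. The trade-off: the paper's argument is shorter, quantitative (an explicit $1/R$ rate before taking limits), and does not need the time regularity of $t\mapsto\mu^n_t$ or $t\mapsto\mu_t$ for this step, whereas your argument avoids \cref{wp_lp} here at the cost of invoking the equicontinuity estimates and a soft compactness/net argument.
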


\begin{proof}
We have
\begin{align}
W_p^p (\mu^n_t, \mu_t) &\lesssim \int_{\bR^d} |x|^p \times | \ell^n_t (x) - \ell_t (x) | \diff x \quad \text{by \zcref{wp_lp}} \\
&\le \int_{B(0, k)} |x|^p \times | \ell^n_t (x) - \ell_t (x) | \diff x + \int_{B^c(0, k)} |x|^p \{ \ell^n_t (x) + \ell_t (x) \} \diff x \\
&\eqcolon I (t, n, k) + J (t, n, k)
\qtextq{for every} k > 0.
\end{align}

By \eqref{main_thm1:eq5},
\[
\lim_n \sup_{t \in [ R^{-1}, T]} I (t, n, k) = 0.
\]

By \eqref{main_thm1:ineq8} and \eqref{main_thm1:ineq10a},
\[
\sup_{n \in {\bN}} \sup_{t \in \bT} J (t, n, k) \lesssim \phi(k).
\]

As such,
\begin{align}
\limsup_n \sup_{t \in [ R^{-1}, T]} W_p^p (\mu^n_t, \mu_t) & \lesssim \begin{myaligned}[t]
& \limsup_n \sup_{t \in [ R^{-1}, T]} I (t, n, k) \\
& + \limsup_n \sup_{t \in \bT} J (t, n, k) 
\end{myaligned} \\
& \lesssim  \phi (k) .
\end{align}

The claim then follows by taking the limit $k \to \infty$.
\end{proof}

\subsection{Existence of a weak solution}

Notice that $\rho^n * \mu_t^n = \rho^n * \ell_t^n$. The Fokker-Planck equation (in distributional sense) associated with \eqref{main_thm1:eq1} is
\begin{equation} \label{main_thm1:eq9}
\partial_t \ell^n_t (x) = - \partial_{x_i} \{ b(t, x, \{ \rho^n * \ell_t^n \} (x), \mu^n_t) \ell^n_t (x) \} + \frac{1}{2} \partial_{x_i} \partial_{x_j} \{ a^{i, j} (t, x, \mu^n_t) \ell^n_t (x) \} .
\end{equation}

This means for each $(\varphi, \psi) \in C^\infty_c (0, T) \times C^\infty_c (\bR^d)$ that
\begin{align} 
& - \int_{\bT} \int_{\bR^d} \varphi ' (t) \psi(x) \diff \mu^n_t (x) \diff t \\
= & \begin{myaligned}[t]
& \sum_{i=1}^d \int_{\bT} \int_{\bR^d} b(t, x, \{ \rho^n * \ell_t^n \} (x), \mu^n_t) \varphi (t) \partial_{x_i} \psi (x) \diff \mu^n_t (x) \diff t \\
& + \frac{1}{2} \sum_{i, j = 1}^d \int_{\bT} \int_{\bR^d} a^{i, j} (t, x, \mu^n_t) \varphi (t) \partial_{x_i} \partial_{x_j} \psi (x) \diff \mu^n_t (x) \diff t.
\end{myaligned} \label{main_thm1:eq10}
\end{align}

Above, $a^{i, j}$ is the entry in the $i$-th row and $j$-th column of $a$. We recall from \zcref{wp_convergence} and \eqref{main_thm1:eq5} that
\begin{align}
W_p (\mu^n_t, \mu_t) &\xrightarrow{n \to \infty} 0, \label{main_thm1:eq11} \\
\sup_{x \in B(0, R)} |\ell^n_t (x) - \ell_t (x)| &\xrightarrow{n \to \infty} 0
\qtextq{for every} (t, R) \in \bT \times \bR_+. \label{main_thm1:eq12}
\end{align}

We fix $(\varphi, \psi) \in C^\infty_c (0, T    ) \times C^\infty_c (\bR^d)$. By \eqref{main_thm1:eq11}, the boundedness of $a$, and the continuity of $a$ w.r.t distribution variable,
\begin{equation}
\int_{\bR^d} a^{i, j} (t, x, \mu^n_t) \partial_{x_i} \partial_{x_j} \psi (x) \diff \mu^n_t (x)  \xrightarrow{n \to \infty}  \int_{\bR^d} a^{i, j} (t, x, \mu_t) \partial_{x_i} \partial_{x_j} \psi (x) \diff \mu_t (x).
\end{equation}

Let $S \coloneq B(0, 1) + \supp \psi$. Then $S$ is bounded. By triangle inequality,
\begin{align}
\|1_S \{ [ \rho^n * \ell_t^n ] - \ell_t \} \|_\infty &\le \|1_S \{ \rho^n * [ \ell_t^n- \ell_t ] \} \|_\infty + \|1_S \{ \rho^n *\ell_t- \ell_t \} \|_\infty \\
&\le \| \rho^n * \{ 1_{S} [\ell_t^n- \ell_t] \} \|_\infty + \|1_S \{ \rho^n *\ell_t- \ell_t \} \|_\infty \\
&\le \| 1_{S} [ \ell_t^n- \ell_t ] \|_\infty + \|1_S \{ \rho^n *\ell_t- \ell_t \} \|_\infty.
\end{align}

By \eqref{main_thm1:eq12}, $\| 1_{S} [ \ell_t^n- \ell_t ] \|_\infty \to 0$ as $n \to \infty$. By \cite[Proposition 4.21]{brezis_functional_2011}, $\|1_S \{ \rho^n *\ell_t- \ell_t \} \|_\infty \to 0$ as $n \to \infty$. It follows that $\|1_S \{ [ \rho^n * \ell_t^n ] - \ell_t \} \|_\infty \to 0$ as $n \to \infty$. This, together with \eqref{main_thm1:eq11} and \zcref{main_assmpt1}(5), implies
\begin{equation} \label{main_thm1:eq12a}
\sup_{x \in S} | b(t, x, \{ \rho^n * \ell_t^n \} (x), \mu^n_t) - b(t, x, \ell_t (x), \mu_t) | \xrightarrow{n \to \infty} 0.
\end{equation}

Recall that $|b| \le f_0$ and $f_0 \in \tilde L^{p_0}_{q_0}$. It follows from \eqref{main_thm1:eq11}, \eqref{main_thm1:eq12a} and DCT that
\[
\int_{\bR^d} b(t, x, \{ \rho^n * \ell_t^n \} (x), \mu^n_t) \partial_{x_i} \psi (x) \diff \mu^n_t (x)
\xrightarrow{n \to \infty}
\int_{\bR^d} b(t, x, \ell_t (x), \mu_t) \partial_{x_i} \psi (x) \diff \mu_t (x). 
\]

Taking the limit $n \to \infty$ in \eqref{main_thm1:eq10}, we get
\begin{align} 
& - \int_{\bT} \int_{\bR^d} \varphi ' (t) \psi(x) \diff \mu_t (x) \diff t \\
= & \begin{myaligned}[t]
& \sum_{i=1}^d \int_{\bT} \int_{\bR^d} b(t, x, \ell_t (x), \mu_t) \varphi (t) \partial_{x_i} \psi (x) \diff \mu_t (x) \diff t \\
& + \frac{1}{2} \sum_{i, j = 1}^d \int_{\bT} \int_{\bR^d} a^{i, j} (t, x, \mu_t) \varphi (t) \partial_{x_i} \partial_{x_j} \psi (x) \diff \mu_t (x) \diff t .
\end{myaligned} \label{main_thm1:eq13}
\end{align}

So $\ell$ satisfies the Fokker-Planck equation
\begin{equation} \label{main_thm1:eq14}
\partial_t \ell_t (x) = - \partial_{x_i} \{ b(t, x, \ell_t (x), \mu_t) \ell_t (x) \} + \frac{1}{2} \partial_{x_i} \partial_{x_j} \{ a^{i, j} (t, x, \mu_t) \ell_t (x) \} .
\end{equation}

Moreover, $\ell$ satisfies the following integrability estimate:

\begin{lemma} \label{integrability}
There exists a constant $c>0$ (depending on $\Theta_1$) such that
\[
\int_{\bT} \int_{\bR^d} \{|b(t, x, \ell_t (x), \mu_t)| + |a(t, x, \mu_t)| \} \diff \mu_t (x) \diff t \le c( 1 + \|f_0\|_{\tilde L^{p_0}_{q_0}}).
\]
\end{lemma}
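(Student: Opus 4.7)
The plan is to split the integral into its drift and diffusion contributions. The diffusion term is immediate: by \cref{main_assmpt1}[A4], $\|\sigma_t\|_\infty \le C$ and hence $|a_t(x, \mu_t)| \le C^2$, giving
\[
\int_\TT \int_{\RR^d} |a(t,x,\mu_t)| \diff \mu_t(x) \diff t \le C^2 T.
\]
For the drift term, \cref{main_assmpt1}[A2] gives $|b(t, x, \ell_t(x), \mu_t)| \le f_0(t, x)$, so it suffices to establish
\[
\int_\TT \int_{\RR^d} f_0(t,x) \ell_t(x) \diff x \diff t \le c \|f_0\|_{\tilde L^{p_0}_{q_0}}
\]
for some constant $c$ depending only on $\Theta_1$.

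To obtain a constant independent of $\nu$, I will not appeal to the $L^\infty$ bound on $\ell_t$ (which carries a factor of $\|\ell_\nu\|_{C^\alpha_b}$) but will instead pass to the limit along the mollified SDEs. By \eqref{main_thm1:eq5}, $\ell^n_t(x) \to \ell_t(x)$ pointwise in $(t, x)$, and since $f_0 \ell^n_t \ge 0$, applying Fatou's lemma first in $x$ and then in $t$ yields
\[
\int_\TT \int_{\RR^d} f_0 \ell_t \diff x \diff t \le \liminf_n \int_\TT \int_{\RR^d} f_0 \ell^n_t \diff x \diff t = \liminf_n \EE \bigg[ \int_0^T f_0(t, X^n_t) \diff t \bigg].
\]

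The decisive step is then to apply Krylov's estimate \cref{krylov_khasminskii_estimate}[2] to the classical SDE \eqref{main_thm1:eq4} satisfied by $X^n$, with a constant uniform in $n$. By \cref{same_parameters}, the pair $(\bar b^n, \bar \sigma^n)$ satisfies \cref{drift_noise4}, and hence \cref{drift_noise3} (with the trivial decomposition $b^{(0)} = \bar b^n$, $b^{(1)} = 0$, and $f_1$ a constant dominating $|\nabla \bar \sigma^n_t| \le C$), with parameters determined solely by $\Theta_1$; consequently the Krylov constant for $X^n$ is uniform in $n$ and depends only on $\Theta_1$. Since $(p_0, q_0) \in \KKK$ implies $d/p_0 + 2/q_0 < 1 < 2$, I apply \cref{krylov_khasminskii_estimate}[2] either directly with $(p_0, q_0)$ (when both exponents are finite so that $(p_0, q_0) \in \bar \KKK$) or, if one of them is $\infty$, after the trivial embedding $\tilde L^{p_0}_{q_0}(\TT) \hookrightarrow \tilde L^{\bar p_0}_{\bar q_0}(\TT)$ for some $(\bar p_0, \bar q_0) \in \bar \KKK$ with finite exponents, valid because $\TT$ is bounded. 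The main obstacle is merely the bookkeeping needed to confirm that the Krylov constant depends only on $\Theta_1$, and this reduces to \cref{same_parameters}, already established in \cref{stability_estimate}.
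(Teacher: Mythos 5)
Your proposal is correct and follows essentially the same route as the paper: bound $|a|$ by a constant and $|b|$ by $f_0$, apply Krylov's estimate to the mollified SDEs \eqref{main_thm1:eq4} with a constant uniform in $n$ thanks to \cref{same_parameters}, and pass to the limit using the convergence \eqref{main_thm1:eq5}. Your direct application of Fatou's lemma to the nonnegative integrands $f_0\,\ell^n$ is in fact slightly cleaner than the paper's truncation--MCT--DCT--Fatou combination, and your remark on embedding $\tilde L^{p_0}_{q_0}$ into a space with finite exponents (where for the spatial exponent the justification is the unit-ball localization rather than the boundedness of $\TT$) addresses a point the paper leaves implicit.
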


\begin{proof}
By \eqref{main_thm1:eq4},
\[
\diff X_t^{n} = \bar b^n (t, X_t^{n}) \diff t + \bar \sigma^n (t, X_t^{n}) \diff B_t . 
\]

Recall that \zcref{drift_noise4} is a special case of \zcref{drift_noise3}. By \zcref{same_parameters}, all pairs $(\bar b^n, \bar \sigma^n)_{n \in \bN}$ satisfy \zcref{drift_noise3} for the same set of parameters. Then
\begin{align}
\int_{\bT} \int_{\bR^d} f_0(t, x) \diff \mu^n_t (x) \diff t &= \bE \bigg [ \int_\bT f_0 (t, X^n_t) \diff t \bigg ] \quad \text{by Tonelli's theorem} \\
&\lesssim 1 + \|f_0\|_{\tilde L^{p_0}_{q_0}} \quad \text{by \zcref{krylov_khasminskii_estimate}(2)} . \label{main_thm1:ineq14}
\end{align}

We have
\begin{align}
\int_{\bT} \int_{\bR^d} f_0(t, x) \diff \mu_t (x) \diff t &= \int_{\bT} \lim_k \int_{\bR^d} 1_{B(0, k)} (x) f_0(t, x) \ell_t (x) \diff x \diff t \label{main_thm1:eq15} \\
&\le \liminf_k \int_{\bT} \int_{\bR^d} 1_{B(0, k)} (x) f_0(t, x) \ell_t (x) \diff x \diff t \label{main_thm1:ineq15} \\
&= \liminf_k \int_{\bT} \lim_n \int_{\bR^d} 1_{B(0, k)} (x) f_0(t, x) \ell^n_t (x) \diff x \diff t \label{main_thm1:eq16} \\
&\le \liminf_k \liminf_n \int_{\bT} \int_{\bR^d} 1_{B(0, k)} (x) f_0(t, x) \ell^n_t (x) \diff x \diff t \label{main_thm1:ineq16} \\
&\le \liminf_n \int_{\bT} \int_{\bR^d} f_0(t, x) \ell^n_t (x) \diff x \diff t \\
&\lesssim 1 + \|f_0\|_{\tilde L^{p_0}_{q_0}} \quad \text{by \eqref{main_thm1:ineq14}}. \label{main_thm1:ineq17}
\end{align}

Above, \eqref{main_thm1:eq15} is due to MCT; \eqref{main_thm1:ineq15} and \eqref{main_thm1:ineq16} are due to Fatou's lemma. We will justify how \eqref{main_thm1:eq16} follows from $f_0 \in \tilde L^{p_0}_{q_0}$ and DCT:
\begin{enumerate}
\item From \eqref{main_thm1:ineq4}, we get $1_{B(0, k)} (x) f_0(t, x) \ell^n_t (x) \lesssim 1_{B(0, k)} (x) f_0(t, x)$.
\item From \eqref{main_thm1:eq12}, we get $1_{B(0, k)} (x) f_0(t, x) \ell^n_t (x) \to 1_{B(0, k)} (x) f_0(t, x) \ell_t (x)$ (as $n \to \infty$) for every $x \in \bR^d$.
\end{enumerate}

We denote by $I$ the LHS of the inequality in the statement of \zcref{integrability}. Then
\begin{align}
I &\lesssim 1 + \int_{\bT} \int_{\bR^d} f_0(t, x) \diff \mu_t (x) \diff t \\
&\lesssim 1 + \|f_0\|_{\tilde L^{p_0}_{q_0}} \quad \text{by \eqref{main_thm1:ineq17}}. 
\end{align}

This completes the proof.
\end{proof}

We have
\begin{enumerate}
\item The maps $(t, x) \mapsto b(t, x, \ell_t (x), \mu_t)$ and $(t, x) \mapsto a(t, x, \mu_t)$ are measurable.
\item By \zcref{integrability}, 
\[
\int_{\bT} \int_{\bR^d} \{|b(t, x, \ell_t (x), \mu_t)| + |a(t, x, \mu_t)| \} \diff \mu_t (x) \diff t < \infty.
\]
\item By \eqref{main_thm1:ineq12}, the map $\bT \to \sP_p (\bR^d), \,t \mapsto \mu_t$ is continuous.
\end{enumerate}

By superposition principle \cite{figalli_existence_2008,trevisan_well-posedness_2016,bogachev_ambrosiofigallitrevisan_2021} as in \cite[Section 2]{barbu2020nonlinear}, \eqref{main_eq1} has a weak solution whose marginal distribution is exactly $(\mu_t)$.

\subsection{Existence of a strong solution}

By the previous subsection, there exists a PS $(\bar \Omega, \bar {\cA},\bar \bP)$ on which there exist an $m$-BM $(\bar B_t)$, an AF $(\bar {\cF}_t)$ and a continuous $(\bar {\cF}_t)$-adapted process $(\bar X_t)$ such that
\[
\begin{cases}
\diff \bar X_t = b(t, \bar X_t, \ell_t (\bar X_t), \mu_t) \diff t + \sigma (t, \bar X_t, \mu_t) \diff \bar B_t , \\
\text{$\nu$ is the distribution of $\bar X_0$, $\mu_t$ is that of $\bar X_t$} , \\
\text{and $\ell_{t}$ is the density of $\bar X_t$} .
\end{cases}
\]

Above, the distribution of $\bar X_0$ is $\nu$, that of $\bar X_t$ is $\mu_t$, and the density of $\bar X_t$ is $\ell_{t}$. We define the map $\bar b : \bT \times \bR^d \times \sP_p (\bR^d) \to \bR^d$ by $\bar b (t, x, \varrho) \coloneq b(t, x, \ell_t (x), \varrho)$. We consider the McKean-Vlasov SDE
\begin{equation} \label{main_thm1:eq17}
\begin{cases} 
\diff Y_t = \bar b (t, Y_t, \mu'_t) \diff t + \sigma (t, Y_t, \mu'_t) \diff B_t , \\
\text{$\nu$ is the distribution of $Y_0$, and $\mu'_t$ is that of $Y_t$} .
\end{cases} 
\end{equation}

We recall that $(B_t)$ is the fixed $m$-BM on the fixed PS $(\Omega, \cA, \bP)$ introduced in \zcref{introduction}. By \cite[Theorem 1.1(1)]{huang2022singular}, \eqref{main_thm1:eq17} is weakly well-posed. On the other hand, $(\bar X_t)$ also satisfies \eqref{main_thm1:eq17}. Then, for every $t \in \bT$, we have $\mu_t = \mu'_t$ and thus the density of $Y_t$ is also $\ell_t$. In particular,
\[ 
\diff Y_t = b (t, Y_t, \ell_t (Y_t), \mu_t) \diff t + \sigma (t, Y_t, \mu_t) \diff B_t.
\]

This completes the proof.

\section{Proof of \zcref{main_thm2}} \label{main_thm2:proof}

For $k \in \{1, 2\}$, we consider the SDE
\begin{equation} \label{main_thm2:eq1}
\begin{cases} 
\diff X^k_t = b(t, X^k_t, \ell^k_t (X^k_t), \mu^k_t) \diff t + \sigma (t, X^k_t) \diff B^k_t , \\
\text{$\nu_k$ is the distribution of $X^k_0$, $\mu^k_t$ is that of $X^k_t$} , \\
\text{and $\ell^k_{t}$ is the density of $X^k_t$}.
\end{cases} 
\end{equation}

Above, $(B^k_t, t \ge 0)$ is an $m$-BM. We define measurable maps $b^k : \bT \times \bR^d \to \bR^d$ by $b^k (t, x) \coloneq b (t, x, \ell^k_t (x), \mu^k_t)$. 

\subsection{Uniqueness of marginal density}

Clearly, $(b^k, \sigma)$ satisfies \zcref{drift_noise4}. We denote $b^k_t (x) \coloneq b^k (t, x)$. By \zcref{duhamel_form1},
\[
\ell^k_t (x) = \int_{\bR^d} p_{0, t}^{0, \sigma} (y, x) \ell_{\nu_k}(y) \diff y + \int_0^t \int_{{\bR^d}} \ell^k_s (y) \langle b^k_s (y) , \nabla_y p^{0, \sigma}_{s, t} (y, x) \rangle \diff y \diff s .
\]

Then
\begin{align}
|\ell^2_t (x) - \ell^1_t (x)| &\le \begin{myaligned}[t]
& \int_{\bR^d} p_{0, t}^{0, \sigma} (y, x) | \ell_{\nu_1}(y) - \ell_{\nu_2}(y) | \diff y \\
& + \int_0^t \int_{{\bR^d}} | b^2_s (y) | \times | \ell^2_s (y) - \ell^1_s (y) | \times | \nabla_y p^{0, \sigma}_{s, t} (y, x) | \diff y \diff s  \\
& + \int_0^t \int_{{\bR^d}} \ell^1_s (y) | b^2_s (y) - b^1_s (y) | \times  | \nabla_y p^{0, \sigma}_{s, t} (y, x) | \diff y \diff s.
\end{myaligned} \label{main_thm2:ineq1}
\end{align}

We write $M_1 \preccurlyeq M_2$ if there exists a constant $c>0$ (depending on $\Theta_1$) such that $M_1 \preccurlyeq c M_2$. Thus
\begin{align}
|\ell^2_t (x) - \ell^1_t (x)| &\preccurlyeq \begin{myaligned}[t]
& \int_{\bR^d} p_{0, t}^{0, \sigma} (y, x) | \ell_{\nu_1}(y) - \ell_{\nu_2}(y) | \diff y \\
& + \int_0^t \int_{{\bR^d}} | \ell^2_s (y) - \ell^1_s (y) | \times | \nabla_y p^{0, \sigma}_{s, t} (y, x) | \diff y \diff s  \\
& + \int_0^t \int_{{\bR^d}} \ell^1_s (y) | b^2_s (y) - b^1_s (y) | \times  | \nabla_y p^{0, \sigma}_{s, t} (y, x) | \diff y \diff s.
\end{myaligned} \label{main_thm2:ineq3}
\end{align}

By \zcref{Holder_estimate2}(1),
\begin{equation} \label{main_thm2:ineq4}
\sup_{t \in \bT} \| \ell^1_t \|_{\infty} \preccurlyeq \| \ell_{\nu_1} \|_{\infty}.
\end{equation}

By \zcref{main_assmpt1}(5),
\begin{align} \label{main_thm2:ineq5}
| b^2_s (y) - b^1_s (y) | &\preccurlyeq | \ell^2_s (y) - \ell^1_s (y) | + W_p (\mu^2_s, \mu^1_s).
\end{align}

By \eqref{main_thm2:ineq3}, \eqref{main_thm2:ineq4} and \eqref{main_thm2:ineq5},
\begin{align}
|\ell^2_t (x) - \ell^1_t (x)| &\preccurlyeq \begin{myaligned}[t]
& \int_{\bR^d} p_{0, t}^{0, \sigma} (y, x) | \ell_{\nu_1}(y) - \ell_{\nu_2}(y) | \diff y \\
& + (1+\| \ell_{\nu_1} \|_{\infty}) \int_0^t \int_{{\bR^d}} | \ell^2_s (y) - \ell^1_s (y) | \times | \nabla_y p^{0, \sigma}_{s, t} (y, x) | \diff y \diff s  \\
& + \int_0^t W_p (\mu^2_s, \mu^1_s) \int_{{\bR^d}} \ell^1_s (y)  | \nabla_y p^{0, \sigma}_{s, t} (y, x) | \diff y \diff s
\end{myaligned} \\
&\eqcolon I_1 (t, x) + (1+\| \ell_{\nu_1} \|_{\infty}) I_2 (t, x) + I_3 (t, x).
\end{align}

The pair $(0, \sigma)$ satisfies \zcref{drift_noise4}. By \zcref{Holder_estimate}, there exists a constant $\lambda >0$ (depending on $\Theta_1$) such that for $i \in \{0, 1\}, 0 \le s <t \le T$ and $x, y \in \bR^d$:
\begin{equation} \label{main_thm2:ineq6}
|\nabla^i_y p^{0, \sigma}_{s, t} (y, x)| \preccurlyeq p^{i, \lambda}_{t-s} (y-x). 
\end{equation}

Then
\begin{align}
\int_{{\bR^d}} (|x|^p+1) | \nabla^i_y p^{0, \sigma}_{s, t} (y, x) | \diff x & \preccurlyeq \int_{{\bR^d}} (|x|^p+1) p^{i, \lambda}_{t-s} (y - x) \diff x \qtext{by \eqref{main_thm2:ineq6}} \\
& \preccurlyeq (t-s)^{-\frac{i}{2}} (|y|^p + 1). \label{main_thm2:ineq7}
\end{align}

We define a measurable map $f : \bT \to \bR_+$ by
\[
f(s) \coloneq \int_{\bR^d} ( |x|^p + 1) |\ell^2_s (x) - \ell^1_s (x)| \diff x .
\]

By \eqref{moment_estimate:ineq1}, $f$ is bounded. First,
\begin{align}
& \int_{\bR^d} (|x|^p+1) I_1 (t, x) \diff x \\
= & \int_0^t \int_{{\bR^d}} | \ell_{\nu_1} (y) - \ell_{\nu_2} (y) | \int_{{\bR^d}} (|x|^p+1) p^{0, \sigma}_{0, t} (y, x) \diff x \diff y \diff s \\
\preccurlyeq & \int_0^t \int_{{\bR^d}} | \ell_{\nu_1} (y) - \ell_{\nu_2} (y) | (|y|^p + 1) \diff y \diff s \qtext{by \eqref{main_thm2:ineq7}} \\
\preccurlyeq & \int_{{\bR^d}} (|y|^p + 1) | \ell_{\nu_1} (y) - \ell_{\nu_2} (y) | \diff y = f(0).
\end{align}

Second,
\begin{align}
& \int_{\bR^d} (|x|^p+1) I_2 (t, x) \diff x \\
= & \int_0^t \int_{{\bR^d}} | \ell^2_s (y) - \ell^1_s (y) | \int_{{\bR^d}} (|x|^p+1) | \nabla_y p^{0, \sigma}_{s, t} (y, x) | \diff x \diff y \diff s \\
\preccurlyeq & \int_0^t (t-s)^{-\frac{1}{2}} \int_{{\bR^d}} | \ell^2_s (y) - \ell^1_s (y) | (|y| ^p+ 1) \diff y \diff s \qtext{by \eqref{main_thm2:ineq7}} \\
= & \int_0^t (t-s)^{-\frac{1}{2}} f(s) \diff s.
\end{align}

Third,
\begin{align}
& \int_{\bR^d} (|x|^p+1) I_3 (t, x) \diff x \\
= & \int_0^t W_p (\mu^2_s, \mu^1_s) \int_{{\bR^d}} \ell^1_s (y) \int_{{\bR^d}} (|x|^p+1) | \nabla_y p^{0, \sigma}_{s, t} (y, x) | \diff x \diff y \diff s \\
\preccurlyeq & \int_0^t (t-s)^{-\frac{1}{2}} W_p (\mu^2_s, \mu^1_s) \int_{{\bR^d}} \ell^1_s (y) (|y|^p + 1) \diff y \diff s \quad \text{by \eqref{main_thm2:ineq7}} \\
\preccurlyeq & (1 + M_p (\nu_1)) \int_0^t (t-s)^{-\frac{1}{2}} W_p (\mu^2_s, \mu^1_s)  \diff s \quad \text{by \eqref{moment_estimate:ineq1}} \\
\preccurlyeq & (1+ M_p (\nu_1)) \int_0^t (t-s)^{-\frac{1}{2}} |f(s)|^{\frac{1}{p}} \diff s \quad \text{by \zcref{wp_lp}}.
\end{align}

To sum up,
\[
f(t) \preccurlyeq f(0) + (1 + \| \ell_{\nu_1} \|_{\infty} + M_p (\nu_1) ) \int_0^t (T-s)^{-\frac{1}{2}} (f(s)+|f(s)|^{\frac{1}{p}}) \diff s.
\]

Because $p=1$, we get
\[
f(t) \preccurlyeq f(0) + (1 + \| \ell_{\nu_1} \|_{\infty} + M_1 (\nu_1) ) \int_0^t (T-s)^{-\frac{1}{2}} f(s) \diff s.
\]

By Gronwall's lemma,
\begin{equation} \label{main_thm2:ineq8}
\sup_{t \in \bT} f (t) \preccurlyeq f(0) \exp \big\{2\sqrt{T}(1+\|\ell_{\nu_1}\|_{\infty}+M_1(\nu_1))\big\} .
\end{equation}

This implies the existence of the function $\Lambda$ as required in \zcref{main_thm2}(1).

\subsection{Weak and strong uniqueness of a solution}

By \eqref{main_thm2:eq1},
\begin{equation} \label{main_thm2:eq2}
\diff X^k_t = b^k(t, X^k_t) \diff t + \sigma (t, X^k_t) \diff B^k_t.
\end{equation}

Now we let $\nu \coloneq \nu_1 = \nu_2$. By \eqref{main_thm2:ineq8}, $\ell^1_t = \ell^2_t$ and $\mu^1_t = \mu^2_t$ for $t \in \bT$. Then $\pmb{b} \coloneq b^1 = b^2$. We consider the SDE
\begin{equation} \label{main_thm2:eq3}
\begin{cases} 
\diff Y_t = \pmb{b} (t, Y_t) \diff t + \sigma (t, Y_t) \diff B_t , \\
\text{$\nu$ is the distribution of $Y_0$} .
\end{cases} 
\end{equation}

By \cite[Theorem 1.1(1)]{huang2022singular}, \eqref{main_thm2:eq3} is well-posed. On the other hand, $(X^1_t)$ and $(X^2_t)$ satisfy \eqref{main_thm2:eq3}. It follows that \eqref{main_eq1} has both weak and strong uniqueness.

\section{Acknowledgment}

The author is grateful to Professor Sébastien Gadat for his generous funding during the author's PhD. The author is grateful to Professor Feng-Yu Wang for kindly addressing all questions about his work \cite{wang2023singular}. The author would like to thank Professor Michael Röckner, Dr. Zimo Hao and Professor Francesco Russo for their useful comments and suggestions.

\printbibliography

\section{Appendix} \label{appendix}

\begin{proof}[Proof of \zcref{wp_lp}]
For $B \in \cB (\bR^d)$, we denote by $\Pi(B)$ the collection of all finite measurable partitions of $B$. This means $(B_1, \dots, B_n) \in \Pi(B)$ i.f.f $\{B_1, \dots, B_n\} \subset \cB (\bR^d)$ are pairwise disjoint and $B = \bigcup_{k=1}^n B_k$. We have
\begin{align}
|\mu-\nu| (B) & = \sup \bigg \{ \sum_{k=1}^n | (\mu-\nu) (B_k) | : (B_1, \dots, B_n) \in \Pi(B)  \bigg \} \\
& = \sup \bigg \{ \sum_{k=1}^n \big | \int_{B_k} (\ell_\mu-\ell_\nu) (x) \diff x \big | : (B_1, \dots, B_n) \in \Pi(B)  \bigg \} \\
& \le \sup \bigg \{ \sum_{k=1}^n \int_{B_k} |\ell_\mu-\ell_\nu| (x) \diff x : (B_1, \dots, B_n) \in \Pi(B)  \bigg \} \\
& = \int_{B} |\ell_\mu-\ell_\nu| (x) \diff x.
\end{align}

On the other hand, we have from \cite[Proposition 7.10]{villani2003topics} that
\[
W_p^p (\mu, \nu) \le (1 \vee 2^{p-1}) \int_{\bR^d} |x|^p \diff |\mu-\nu| (x).
\]

The claim then follows.
\end{proof}

\begin{proof} [Proof of \zcref{krylov_khasminskii_estimate}]
\begin{enumerate}
\item There exists $\bar q \in (1, q)$ such that $(p, \bar q) \in \bar K$. By \cite[Theorem 3.1]{zhang2021zvonkin}, there exists a constant $c_1 > 0$ (depending on $\Theta_3, p, \bar q$) such that for $0 \le t_0 < t_1 \le T$, stopping time $\tau$ and $g \in \tilde L^{p}_{\bar q}(t_0, t_1)$:
\begin{equation} \label{krylov_khasminskii_estimate:ineq1}
\bE \bigg [ \int_{t_0 \wedge \tau}^{t_1 \wedge \tau} |g(s, X_s)|  \diff s \bigg | \mathcal F_{t_0} \bigg ] \le c_1 \|g\|_{\tilde L^{p}_{\bar q} (t_0, t_1)}.
\end{equation}

Let $\delta \coloneq \frac{1}{\bar q} - \frac{1}{q} \in (0, 1)$. By Hölder's inequality, it holds for $0 \le t_0 < t_1 \le T$ and $g \in \tilde L^{p}_{q} (t_0, t_1)$ that
\begin{equation} \label{krylov_khasminskii_estimate:ineq2}
\|g\|_{\tilde L^{p}_{\bar q} (t_0, t_1)} \le (t_1 -t_0)^{\delta} \|g\|_{\tilde L^{p}_{q} (t_0, t_1)}.
\end{equation}

We denote by $I^n_j$ the open interval $(\frac{(j-1)(t_1-t_0)}{n}, \frac{j(t_1-t_0)}{n})$ for $j=1, \ldots,n$. We fix $g \in \tilde L^{p}_{q} (t_0, t_1) \subset \tilde L^{p}_{\bar q} (t_0, t_1)$. Let $n \ge 2$ be the smallest integer such that
\begin{equation} \label{krylov_khasminskii_estimate:ineq3}
\|g\|_{\tilde L^{p}_{\bar q} (I_j^n)} \le \frac{1}{2c_1} \qtextq{for} j=1, \ldots,n.
\end{equation}

By \eqref{krylov_khasminskii_estimate:ineq1} and \cite[Lemma 3.5]{10.1214/19-AIHP959},
\begin{equation} \label{krylov_khasminskii_estimate:ineq4}
\bE \bigg [ \exp \bigg ( \int_{t_0}^{t_1} |g(s, X_s)|  \diff s \bigg ) \bigg | \mathcal F_{t_0} \bigg ] \le 2^n.
\end{equation}

By \eqref{krylov_khasminskii_estimate:ineq3}, there exists $\bar j \in \{1, \ldots, n-1\}$ such that
\begin{equation} \label{krylov_khasminskii_estimate:ineq5}
\|g\|_{\tilde L^{p}_{\bar q} (I_{\bar j}^{n-1})} > \frac{1}{2c_1}.
\end{equation}

By \eqref{krylov_khasminskii_estimate:ineq2} and \eqref{krylov_khasminskii_estimate:ineq5},
\[
\left ( \frac{t_1-t_0}{n-1} \right )^{\delta}  \|g\|_{\tilde L^{p}_{q} (I_{\bar j}^{n-1})} > \frac{1}{2c_1} .
\]

Then
\[
n < 1 + T(2c_1)^{-\frac{1}{\delta}} \|g\|_{\tilde L^{p}_{q} (t_0, t_1)}^{1/\delta}.
\]

The estimate \eqref{krylov_khasminskii_estimate:ineq0a} then follows with $k \coloneq \frac{1}{\delta}$.

\item We follow an elegant idea in \cite[Lemma 2.3]{huang2022singular}. Let $C_j \coloneq e^{j-1}$. We define $h : \bR_+ \to \bR_+$ by $h(r) \coloneq |\ln (C_j +r)|^j$. Then $h$ is concave. We have
\begin{align}
& \bE \bigg [ \left ( \int_{t_0}^{t_1} |g(s, X_s)| \diff s \right )^j \bigg | \mathcal F_{t_0} \bigg ] \\
\le  & \bE \bigg [ \bigg \{ \ln \bigg (C_j + \exp \bigg ( \int_{t_0}^{t_1} |g(s, X_s)| \diff s \bigg ) \bigg ) \bigg \}^j \bigg | \mathcal F_{t_0} \bigg ] \\
\le  &  \bigg \{ \ln \bigg (C_j + \bE \bigg [ \exp \bigg ( \int_{t_0}^{t_1} |g(s, X_s)| \diff s \bigg ) \bigg | \mathcal F_{t_0} \bigg ]  \bigg ) \bigg \}^j \qtext{by Jensen's inequality} \\
\le  &  \{ \ln [ C_j + \exp(c(1+ \|g\|_{\tilde L^{p}_{q} (t_0, t_1)}^k ) ) ] \}^j .
\end{align}

Above, the constants $c, k >0$ are given by \eqref{krylov_khasminskii_estimate:ineq0a}. As a result, there exists a constant $\bar C_j >0$ (depending on $c, j$) such that
\[
\bE \bigg [ \left ( \int_{t_0}^{t_1} |g(s, X_s)| \diff s \right )^j \bigg | \mathcal F_{t_0} \bigg ] \le \bar C_j (1 + \|g\|_{\tilde L^{p}_{q} (t_0, t_1)}^k)^j.
\]

Replacing $g$ with $\frac{g}{\|g\|_{\tilde L^{p}_{q} (t_0, t_1)}}$ in above inequality, we obtain
\[
\bE \bigg [ \left ( \int_{t_0}^{t_1} |g(s, X_s)| \diff s \right )^j \bigg | \mathcal F_{t_0} \bigg ] \le \bar C_j 2^j \|g\|_{\tilde L^{p}_{q} (t_0, t_1)}^j.
\]

The estimate \eqref{krylov_khasminskii_estimate:ineq0b} then follows. This completes the proof.
\end{enumerate}
\end{proof}

\begin{proof}[Proof of \zcref{duhamel_form1}]

First, we recall some notions related to \eqref{eq1}. The semigroup $(P_{s, t}^{b, \sigma})_{0 \le s < t \le T}$ is defined for $x \in {\bR}^d$ and bounded measurable function $f:\bR^d \to \bR$ by
\begin{equation} \label{prelim:sde:semi_group1}
P_{s, t}^{b, \sigma} f (x) \coloneq \bE [f(X^x_{s, t})] = \int_{\bR^d} p^{b, \sigma}_{s, t} (x, y) f(y) \diff y.
\end{equation}

The differential operator $(L_t^{b, \sigma})_{t \in \bT}$ is defined for $f \in C^2 (\bR^d)$ and $x \in \bR^d$ by
\[
L_t^{b, \sigma} f (x) \coloneq \langle b_t (x), \nabla f (x) \rangle + \frac{1}{2} \trace ( a_t (x) \nabla^2 f (x) ) .
\]

The backward Kolmogorov equation holds, i.e., for $f \in C^2_b (\bR^d), x \in \bR^d$ and $0 \le s < t \le T$:
\begin{equation} \label{backward_kolmogorov}
\partial_s P_{s, t}^{b, \sigma} f (x) + L_s^{b, \sigma} P_{s, t}^{b, \sigma} f (x) =0.
\end{equation}

Next we go on to prove the result. Let $f \in C^\infty_c (\bR^d)$. Applying Itô's lemma on $[0, t] \times {\bR}^d \to \bR, \, (s, x) \mapsto (P_{s, t}^{0, \sigma} f) (x)$, we have
\begin{align}
\diff \{ (P_{s, t}^{0, \sigma} f) (X_s) \} & =  \{(\partial_s + L_s^{b, \sigma}) (P_{s, t}^{v, \sigma} f)\} (X_s) \diff s + \diff  M_s \\
& =  \{(\partial_s +L_s^{0, \sigma}) + (L_s^{b, \sigma} - L_s^{0, \sigma}) \} (P_{s, t}^{0, \sigma} f) (X_s) \diff s + \diff  M_s\\
& = \{ (L_s^{b, \sigma} - L_s^{0, \sigma}) (P_{s, t}^{0, \sigma} f) \} (X_s) \diff s + \diff  M_s. \label{duhamel_form1:2:eq1}
\end{align}

Above, $M_0=0$ and $\diff M_s = \{ \nabla (P_{s, t}^{0, \sigma} f)  (X_s) \}^\top \sigma (s, X_s) \diff B_s$ for $s \in [0, t]$; and \eqref{duhamel_form1:2:eq1} is due to backward Kolmogorov equation \eqref{backward_kolmogorov}. Then
\begin{equation} \label{duhamel_form1:2:eq2}
f (X_t) = \begin{multlined}[t]
(P_{0, t}^{0, \sigma} f) (X_0) + \int_0^t \langle b_s (X_s) , \nabla (P_{s, t}^{0, \sigma} f)  (X_s) \rangle \diff s \\
+ \int_0^t \{ \nabla (P_{s, t}^{0, \sigma} f)  (X_s) \}^\top \sigma (s, X_s) \diff B_s .
\end{multlined}
\end{equation}

Clearly, $f$ and thus $P_{0, t}^{0, \sigma} f$ are bounded. Let's prove that $\| \nabla (P_{s, t}^{0, \sigma} f) \|_\infty < \infty$. It suffices to verify that $P_{s, t}^{0, \sigma} f$ is Lipschitz. We consider the SDE
\[
\diff Y^{x}_{s,t} = v (t, Y^{x}_{s, t}) \diff t + \sigma (t, Y^{x}_{s, t}) \diff B_s, \quad t \in [s, T], Y^{x}_{s, s}=x.
\]

By \cite[Theorem 1.1(2)]{huang2022singular}, there exists a constant $c_1 >0$ such that
\begin{align}
|P_{s, t}^{0, \sigma} f (x) - P_{s, t}^{0, \sigma} f (y) | &= |\bE [f(Y^{x}_{s, t})] - \bE [f(Y^{y}_{s, t})]| \\
&\le \| \nabla f \|_{\infty} \bE [ |Y^{x}_{s, t} - Y^{y}_{s, t}| ] \\
&\le c_1 \| \nabla f \|_{\infty} |x-y| .
\end{align}

We have
\begin{align}
\bE \bigg [ \int_0^t | b_s (X_s) | \times |\nabla (P_{s, t}^{0, \sigma} f)  (X_s) | \diff s \bigg ] & \le \| \nabla (P_{s, t}^{0, \sigma} f) \|_\infty \bE \bigg [ \int_0^t g (t, X_s) \diff s \bigg ] \\
& \lesssim \| \nabla (P_{s, t}^{0, \sigma} f) \|_\infty \| g \|_{\tilde L^{\bar p}_{\bar q} (t)} \qtext{by \zcref{krylov_khasminskii_estimate}(2)}.
\end{align}

So each term in \eqref{duhamel_form1:2:eq2} is $\bP$-integrable. Then
\begin{align} \label{duhamel_form1:2:eq3}
\int_{\bR^d} \ell_t (x) f(x) \diff x &= \begin{multlined}[t]
\int_{\bR^d} \ell_{\nu} (x) (P_{0, t}^{0, \sigma} f)(x) \diff x \\
+ \int_0^t \int_{\bR^d} \ell_s (x) \langle b_s (x) , \nabla (P_{s, t}^{0, \sigma} f)  (x) \rangle \diff x \diff s.
\end{multlined}
\end{align}

By \zcref{Holder_estimate}(1) and Leibniz integral rule,
\[
\nabla (P_{s, t}^{0, \sigma} f)  (x) = \nabla_x \int_{\bR^d} p_{s, t}^{0, \sigma} (x, y) f(y) \diff y = \int_{\bR^d} \nabla_x p_{s, t}^{0, \sigma} (x, y) f(y) \diff y.
\]

So \eqref{duhamel_form1:2:eq3} is equivalent to
\begin{equation}
\int_{\bR^d} \ell_t (x) f(x) \diff x = \begin{multlined}[t]
\int_{\bR^d} \left ( \int_{\bR^d} p_{0, t}^{0, \sigma} (y, x) \ell_{\nu}(y) \diff y \right ) f (x) \diff x \\
+ \int_{\bR^d} \left (\int_0^t \int_{\bR^d} \ell_s (y) \langle b_s (y) , \nabla_y p_{s, t}^{0, \sigma} (y, x) \rangle \diff y \diff s \right ) f (x) \diff x .
\end{multlined}
\end{equation}

The required representation then follows.
\end{proof}

\begin{proof}[Proof of \zcref{tail_moment}]
WLOG, we consider $R > 0$. We write $M_1 \lesssim M_2$ if there exists a constant $c>0$ depending on $(\Theta_4, \nu, p)$ such that $M_1 \le c M_2$. We denote by $\ell_t$ the density of $X_t$. Then
\[
\ell_t (x) = \int_{\bR^d} p^{b, \sigma}_{0, t} (y, x) \diff \nu (y) .
\]

By \zcref{Holder_estimate}(1), there exists a constant $\lambda \in (0, 1)$ depending on $\Theta_4$ such that $p^{b, \sigma}_{0, t} (y, x) \lesssim p^{0, \lambda}_t (y-x)$. Then $\ell_t (x) \lesssim p^{0, \lambda}_t * \nu (x)$. Let $Z$ be a standard normal random variable on $\bR^d$. Let $Y$ be a random variable on $\bR^d$, independent of $Z$, with distribution $\nu$. Let $c_t \coloneq \sqrt{\frac{t}{2 \lambda}}$ and $s \coloneq \frac{1}{2}$. Then
\begin{align}
\int_{B_R^c} | \cdot |^p \diff \mu_t & \lesssim \bE [ 1_{\{ | c_t Z + Y | > R \}} | c_t Z + Y |^p ] \\
& \lesssim \bE [ ( 1_{\{ | c_t Z | > s R \}} + 1_{\{ | Y | > (1-s) R \}} ) (| c_t|^p |Z|^p + |Y |^p) ] \\
& \lesssim \begin{myaligned}[t]
& \bE [ 1_{\{ | Z | > \frac{s R}{c_T} \}} |Z|^p ] + \bE [ 1_{\{ | Z | > \frac{s R}{c_T} \}} |Y|^p ] \\
& + \bE [  1_{\{ | Y | > (1-s) R \}} |Z|^p ] + \bE [ 1_{\{ | Y | > (1-s) R \}} |Y |^p ]
\end{myaligned} \\
& \eqcolon I_1 (R) + I_2 (R) + I_3 (R) + I_4 (R) \\
& \eqcolon \phi (R).
\end{align}

By Markov's inequality,
\begin{align}
\bP [ | Z | > s R /c_T ] & \le \frac{c_T \bE [ |Z| ]}{sR} , \\
\bP [ | Y | > (1-s) R ] & \le \frac{\bE [ |Y| ]}{(1-s) R} .
\end{align}

We have $\bE [|Z|^p ] + \bE |Y|^p ] < \infty$. By dominated convergence theorem (DCT),
\[
\lim_{R \to \infty} I_1 (R) = \lim_{R \to \infty} I_2 (R) = \lim_{R \to \infty} I_3 (R) = \lim_{R \to \infty} I_4 (R) = 0.
\]

This completes the proof.
\end{proof}

\end{document}